	\newcommand{\Eps}{\mathcal{E}}
\DeclareMathOperator{\Aut}{Aut}
\DeclareMathOperator{\SL}{SL}
\begin{document}

	\title{Non-rational varieties with the Hilbert Property}
	\author{Julian Lawrence Demeio}
	
%
	\maketitle
	
		\begin{abstract}
		A variety $X$ over a field $k$ is said to have the Hilbert Property if $X(k)$ is not thin. We shall exhibit some examples of varieties, for which the Hilbert Property is a new result.

We give a sufficient 
condition for descending 
the Hilbert Property to the quotient
of a variety by the 
action of a finite group.
Applying this result
to linear actions of groups,
we exhibit some examples
of non-rational unirational
varieties with the Hilbert Property, 
providing positive instances 
of a conjecture posed by
Colliot-Thélène and Sansuc.

We also give a sufficient
condition for a surface with
two elliptic fibrations to
have the Hilbert Property, and use it to 
prove that a certain class
of K3 surfaces have the Hilbert Property,
generalizing a result of
Corvaja e Zannier.
		 
		 
	\end{abstract}

	\section{Introduction}
	
	
	In 1917, Emily Noether tried to approach the Inverse Galois Problem by using the Hilbert Irreducibility Theorem \cite[Thm. 3.4.1]{Serre}. Her strategy relied on her rationality conjecture that every field of the form $k(x_1,\dots,x_k)^G$, $G$ being a finite group acting by permutation on the variables $x_i$, is rational. It is now well known that this conjecture is false \cite{swan}.
	
	The Hilbert Property was introduced by Colliot-Thélène and Sansuc in \cite{congetturastrong}, in an effort to try to recover the original strategy of Noether, by generalizing the classical Hilbert Irreducibility Theorem. 
	
	A geometrically irreducible variety $X$ over a field $k$ is said to have the \textit{Hilbert Property} if, for any finite morphism $\pi:E \rightarrow X$, such that $X(k)\setminus \pi(E(k))$ is not Zariski-dense in $X$, there exists a rational section of $\pi$ (see  \cite[Ch. 3]{Serre} for an introduction of the Hilbert Property).
	The Hilbert Irreducibility Theorem can then be reformulated by saying that rational varieties over number fields have the Hilbert Property.
	
	\smallskip
	
	Colliot-Thélène and Sansuc \cite{congetturastrong} raised the following question:
	
	\begin{question}\label{congetturona}
		Do unirational varieties defined over number fields have the Hilbert Property?
	\end{question}
	
	They observed that a positive answer to Question \ref{congetturona} would settle the Inverse Galois Problem. Soon after, Colliot-Thélène conjectured (see \cite[Sec. 3.5]{Serre}) that every unirational variety has actually a stronger property, i.e. the weak weak approximation property. This conjecture would imply, in particular, a positive answer to Question \ref{congetturona}.

	So far, progress on Question \ref{congetturona} has mainly been achieved in the context of the weak weak approximation for some quotient varieties of the form $G/H$, where $G$ is a linear group and $H$ is a subgroup. Namely, the weak weak approximation property for $G/H$ has been proven when $G$ is semisimple simply connected  and $H$ is either connected \cite{borovoi} or its group of connected components is particularly simple (for instance a semidirect product of abelian groups \cite{harari}, or a solvable group satisfying a certain coprimality condition \cite{weakweaksolvable}). Moreover, when $H$ is a finite group,
	the weak weak approximation and the Hilbert Property of the variety $\SL_n/H$ are equivalent, respectively, to the weak weak approximation and the Hilbert Property of the variety $\A^n/H$\footnote{This follows from the no-name lemma of Bogomolov and Katsylo \cite{bogKats}.}.
	


%
%
%
%
%
%

 Although a lot of work has been devoted to the weak weak approximation, some recent developments concerning the Hilbert Property show that it manifests more geometric flexibility.  In \cite{fibrazioniHP} Bary-Soroker, Fehm, and Petersen show that, under some mild natural hypothesis, some fibrations have the Hilbert Property. In \cite{articoloHP}, using multiple elliptic fibrations, Corvaja and Zannier proved that the Fermat surface $F:\{x^4+y^4=z^4+w^4\}\subset \P_3$ has the Hilbert Property over $\Q$. \footnote{Elliptic fibrations on surfaces had already been used in the literature to prove (potential) Zariski density of rational points (see e.g. \cite{Bogomolov2},\cite{articoloV},\cite{RonaldvanLuijk2012}, and \cite{swinnerton-dyer2013}), and conditional results about weak approximation (see e.g. \cite{swinnertonct}, \cite{swinnertondiagonal2}).}

 Hence we feel that the study of the Hilbert Property might shed some light on Question \ref{congetturona}, and, consequently, on the Inverse Galois Problem, perhaps bypassing the much harder problem of the weak weak approximation.

\smallskip

The main results of this paper, Theorem \ref{teoremino} and Theorem \ref{Thm:fibrations}, both provide sufficient conditions for a variety to have the Hilbert Property.


\smallskip

The first theorem provides a sufficient condition for descending the Hilbert Property to a quotient of a variety by a finite group.
	\begin{theorem}\label{teoremino}
    			Let $k$ be a perfect field, and let $X/k$ be a geometrically integral variety. Let $G$ be a finite group acting generically freely on $X$. Assume that there exist Galois field extensions $\left\lbrace L_i/k \right\rbrace _{i \in I}$, with $G_i=\Gal(L_i/k) \cong G$, and isomorphisms $\alpha_i \in \Hom (G_i,G)$ such that:
    	\begin{itemize}
    		\item[(i)] For each $i\in I$, the twist of $X$ by $\alpha_i$ has the Hilbert Property;
    		\item[(ii)] For any finite field extension $E/k$ there exists an $i \in I$ such that $L_i/k$ and $E/k$ are linearly disjoint. 
    	\end{itemize} 
    	Then, the quotient variety $X/G$ has the Hilbert Property.
    \end{theorem}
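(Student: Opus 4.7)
The plan is to argue by contradiction. Suppose $X/G$ fails the Hilbert Property: then there exist finite covers $\pi_j \colon E_j \to X/G$ (for $j = 1, \ldots, r$), none with a rational section, such that $(X/G)(k) \setminus \bigcup_j \pi_j(E_j(k))$ lies in a proper closed subset. After passing to geometric components, assume each $E_j$ is geometrically integral; set $K := k(X/G)$ and $M^{(j)} := k(E_j) \supsetneq K$. For each $i \in I$, form the base-change $\pi_{j,i} \colon E_{j,i} := E_j \times_{X/G} X^{\alpha_i} \to X^{\alpha_i}$. A $k$-point $x \in X^{\alpha_i}(k)$ whose image $q^{\alpha_i}(x)$ lies in $\pi_j(E_j(k))$ lifts tautologically to $E_{j,i}(k)$, so $X^{\alpha_i}(k) \setminus \bigcup_j \pi_{j,i}(E_{j,i}(k))$ sits inside the preimage under the finite map $q^{\alpha_i}$ of a proper closed subset of $X/G$, and is therefore not Zariski-dense. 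Hypothesis (i) then forces some $\pi_{j,i}$ to admit a rational section, equivalently a $K$-embedding $M^{(j)} \hookrightarrow k(X^{\alpha_i})$ for some $j = j(i)$.

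The next step is to encode these embeddings Galois-theoretically. Geometric integrality of $X$ (and hence of $X/G$) yields the direct-product splitting $\Gal(\bar{k}(X)/K) \cong G \times \Gal(\bar{k}/k)$, and $k(X^{\alpha_i})$ is then the fixed field of the graph $\Gamma_i = \{(\beta_i(\sigma),\sigma) : \sigma \in \Gal(\bar{k}/k)\}$, where $\beta_i$ is the surjection $\Gal(\bar{k}/k) \twoheadrightarrow G_i \xrightarrow{\alpha_i} G$. Let $F_j/K$ be the Galois closure of $M^{(j)}/K$; since $k(X^{\alpha_i})/K$ is Galois, the embedding extends to $F_j \hookrightarrow k(X^{\alpha_i})$, and geometric integrality of $X^{\alpha_i}$ (which is preserved under twisting) forces the field of constants of $F_j$ to equal $k$. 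Hence $F_j$ is a $k$-form of some $\bar{F}_j := \bar{k}(X)^{\bar{N}_j}$ for a proper normal subgroup $\bar{N}_j \lneq G$, classified by a class $\gamma_j \in H^1(k, G/\bar{N}_j)$; unwinding, $F_j \hookrightarrow k(X^{\alpha_i})$ is equivalent to $[\beta_i \bmod \bar{N}_j] = \gamma_j$ in this $H^1$. Surjectivity of $\beta_i$ combined with $\bar{N}_j \neq G$ rules out trivial $\gamma_j$. Since for a constant coefficient group any two homomorphism representatives of a class in $H^1$ differ by inner conjugation, they share a kernel: the Galois extension $K_j := \bar{k}^{\ker\phi_j}$ (where $\phi_j$ represents $\gamma_j$) is therefore a proper extension of $k$, and whenever the embedding holds one has $K_j \subseteq L_i$ (because $\beta_i$ factors through $\Gal(L_i/k)$).

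Condition (ii) now supplies the contradiction: let $E/k$ be the compositum of the finitely many proper extensions $K_j$, and choose $i \in I$ with $L_i$ linearly disjoint from $E$. Then $L_i \cap K_j = k$ for every $j$, contradicting $K_j \subseteq L_i$ with $K_j \neq k$. So for this $i$ no embedding $M^{(j)} \hookrightarrow k(X^{\alpha_i})$ can exist, contradicting the conclusion of the first paragraph. The main technical obstacle I anticipate is the Galois-theoretic bookkeeping of the second paragraph: verifying the direct-product decomposition of $\Gal(\bar{k}(X)/K)$, carefully tracking the non-abelian $H^1$-classification of the twisted forms $F_j$, and discarding those $F_j$ with a non-trivial constant field extension (which never embed into $k(X^{\alpha_i})$). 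Once this machinery is in place, condition (ii) delivers the contradiction with no further effort.
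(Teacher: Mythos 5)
Your overall strategy is sound and runs parallel to the paper's, but it is reorganized in an attractive way: rather than the paper's ``good/bad'' dichotomy on covers followed by a field-of-definition argument (Proposition~\ref{corolltipoA}), you reduce everything to the observation that for each $i$ some $\pi_{j,i}$ must acquire a rational section, then try to extract a Galois-theoretic obstruction from that section. Your first paragraph is essentially correct (modulo a silent decomposition of the possibly-reducible $E_{j,i}$ into irreducible components, which is harmless). The identification $\Gal(\bar{k}(X)/K)\cong G\times\Gal(\bar{k}/k)$ and the description of $k(X^{\alpha_i})$ as the fixed field of the graph $\Gamma_i=\{(\beta_i(\sigma),\sigma)\}$ are also correct.

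The gap is in the sentence ``since $k(X^{\alpha_i})/K$ is Galois, the embedding extends to $F_j\hookrightarrow k(X^{\alpha_i})$.'' The extension $k(X^{\alpha_i})/K$ is \emph{not} Galois in general: $\Gamma_i$ is normal in $G\times\Gal(\bar{k}/k)$ if and only if the (surjective) image of $\beta_i$ lies in the centre of $G$, i.e.\ if and only if $G$ is abelian. For non-abelian $G$ the Galois closure $F_j$ of $M^{(j)}/K$ generally does \emph{not} land in $k(X^{\alpha_i})$, and (this is the same computation) its constant field is strictly larger than $k$. Concretely, writing $H:=\Gal(\bar{k}(X)/M^{(j)})\supseteq\Gamma_i$ and $H_0:=H\cap G$, the Galois closure embeds in $k(X^{\alpha_i})$ precisely when $G/H_0$ is abelian. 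So the set of $j$ for which $\gamma_j$ is well-defined (i.e.\ $F_j$ has constant field $k$) may well miss the $j=j(i)$ produced by your first paragraph, and the ``discarding'' you allude to at the end can leave you with nothing for some $i$. This makes the second and third paragraphs fall through as written.

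The argument is repairable, and the repair brings it close to the paper's route. Work directly with $M^{(j)}$: from $\Gamma_i\subseteq H$ one gets $H=H_0\Gamma_i$ and, crucially, $H_0\trianglelefteq G$ with $H_0\neq G$ (because $\Gamma_i$ projects onto $G$ and normalises $H_0=H\cap H$ inside $G\times\Gal$; this uses surjectivity of $\beta_i$, i.e.\ that $\alpha_i$ is an isomorphism). Any two embeddings $M^{(j)}\hookrightarrow k(X^{\alpha_i})$, $M^{(j)}\hookrightarrow k(X^{\alpha_{i'}})$ force $H^{(i)}$ and $H^{(i')}$ to be conjugate in $G\times\Gal$, which gives the same $H_0$ and shows that $\beta_i\bmod H_0$ and $\beta_{i'}\bmod H_0$ are conjugate in $G/H_0$; in particular their kernels agree, so $K_j:=\bar{k}^{\,\beta_i^{-1}(H_0)}$ depends only on $j$, is a nontrivial subfield of $L_i$, and the contradiction in your last paragraph goes through. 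This is essentially the paper's Proposition~\ref{corolltipoA} in a different guise, and one advantage your phrasing buys is that $H_0$ comes out \emph{normal} automatically (the paper does not need or obtain normality of its $H_j$, relying instead on the field-of-definition statement). But as submitted, the appeal to Galoisness of $k(X^{\alpha_i})/K$ and the passage to $F_j$ and $H^1(k,G/\bar{N}_j)$ is not correct and the proof is incomplete.
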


Observe that in some sense Theorem \ref{teoremino} reverses the connection established by Colliot-Thélène and Sansuc between Question \ref{congetturona} and the Inverse Galois Problem. Indeed, its hypothesis require that the group $G$ is realizable (multiple times) as a Galois group over $k$. This allows us, starting from known positive answers to the Inverse Galois Problem, to exhibit new examples of non-rational unirational varieties with the Hilbert Property. In particular, using Shafarevich's Theorem \cite{shafarevichincorrect}, we deduce the following:
\begin{proposition}\label{Prop:solvable}
	Let $G$ be a finite solvable group acting linearly on an affine space $\A^n/\Q$. Then, the quotient $\A^n/G$ has the Hilbert Property.
\end{proposition}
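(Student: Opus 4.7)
The plan is to apply Theorem \ref{teoremino} directly, with $k = \Q$ and $X = \A^n_\Q$ endowed with the given linear action of $G$. First, I would reduce to the case of a faithful action: if $K \trianglelefteq G$ is the kernel of the homomorphism $G \to \mathrm{GL}_n$, then $\A^n/G = \A^n/(G/K)$, and $G/K$ is still a finite solvable group. A faithful linear action of a finite group on $\A^n$ in characteristic zero is automatically generically free, since for each non-trivial $g \in G$ the fixed locus $\ker(g - \mathrm{id})$ is a proper linear subspace, and a finite union of proper linear subspaces cannot cover $\A^n_\Q$.

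Next, condition (i) of Theorem \ref{teoremino} is essentially automatic. For any isomorphism $\alpha_i : G_i \to G$, the twist $X^{\alpha_i}$ is the $\Q$-form of $\A^n$ associated to the cocycle $\Gal(\overline{\Q}/\Q) \twoheadrightarrow G_i \xrightarrow{\alpha_i} G \hookrightarrow \mathrm{GL}_n$, whose class in $H^1(\Q, \mathrm{GL}_n)$ vanishes by Hilbert 90. Hence $X^{\alpha_i} \cong \A^n_\Q$ as $\Q$-varieties, and being rational it has the Hilbert Property by the classical Hilbert Irreducibility Theorem.

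The heart of the proof, and the step I expect to be the main obstacle, is verifying condition (ii); this is where Shafarevich's theorem enters. Given a finite extension $E/\Q$, I need a Galois $G$-extension $L_i/\Q$ with $L_i \cap E = \Q$. Shafarevich's theorem realizes every finite solvable group as a Galois group over $\Q$, but the construction is in fact flexible enough to allow one to prescribe (up to a finite list of forced auxiliary primes) the set of rational primes at which the resulting extension ramifies. Choosing the ramification locus of $L_i$ to consist of primes that are unramified in $E$ and coprime to $|G|$, the intersection $L_i \cap E$ is a Galois subextension of $\Q$ unramified at every finite prime, hence trivial by Minkowski's theorem. This delivers the required linear disjointness, and with both hypotheses of Theorem \ref{teoremino} in place one concludes that $\A^n/G$ has the Hilbert Property.
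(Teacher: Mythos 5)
Your proposal is correct and follows essentially the same route as the paper: reduce to a faithful (hence generically free) action, note via Hilbert 90 that all twists of $\A^n$ by a linear $G$-action are trivial, and use Shafarevich to verify hypothesis (ii) of Theorem \ref{teoremino}. The paper packages the first two steps into Corollary \ref{HPlineare} (Hilbert Property for $\A^n/G$ is equivalent to strong realizability of $G$), and it cites Neukirch's refinement of Shafarevich's theorem --- split ramification over any prescribed finite set of primes --- to make precise the flexibility in the choice of ramification locus that you invoke somewhat informally.
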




The second theorem we prove provides a sufficient condition for the Hilbert Property to hold for surfaces with two elliptic fibrations.

\begin{theorem}\label{Thm:fibrations}
	Let $K$ be a number field, and $E$ be a simply connected smooth projective algebraic surface defined over $K$, endowed with two elliptic fibrations $\pi_i:E \rightarrow \P_1/K, \ i=1,2$, such that $\pi_1 \times \pi_2:E \rightarrow \P_1 \times \P_1$ is finite. Suppose that the following hold:
	\begin{itemize}
		\item[(a)] The $K$-rational points $E(K)$ are Zariski-dense in $E$;
		\item[(b)] Let $\eta_1 \cong \Spec K(\lambda)$ be the generic point of the codomain of $\pi_1$. All the diramation points (i.e. the images of the ramification points) of the morphism $\restricts{\pi_2}{\pi_1^{-1}(\eta_1)}$ are non-constant in $\lambda$, and the same holds upon inverting $\pi_1$ and $\pi_2$.
	\end{itemize}
	Then the surface $E/K$ has the Hilbert Property.
\end{theorem}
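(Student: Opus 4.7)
By the standard reformulation of the Hilbert Property, it suffices to fix a finite family of finite morphisms $\{f_j : Y_j \to E\}_{j=1}^N$, with $Y_j$ geometrically irreducible of degree $\geq 2$, and show that the set $E(K) \setminus \bigcup_j f_j(Y_j(K))$ is Zariski dense in $E$. After passing to Galois closures and restricting to a common open $V \subseteq E$ of \'etaleness, one may assume each $f_j$ is finite \'etale over $V$ with $Y_j$ geometrically connected. The strategy is to reduce the Hilbert Property for $E$ to the Hilbert Property for the elliptic fibers $E_t$ (available whenever $E_t(K)$ is Zariski-dense, i.e., $E_t$ has positive Mordell--Weil rank) together with the Hilbert Irreducibility Theorem on the base $\P_1$ along $\pi_1$ (and symmetrically along $\pi_2$).

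For each $j$, consider the restriction $Y_j^{(1)} := Y_j \times_E E_{\eta_1} \to E_{\eta_1}$ to the generic fiber of $\pi_1$. If $Y_j^{(1)}$ is geometrically irreducible over $K(\lambda)$, then Hilbert irreducibility applied to the parameter $\lambda$ produces a Hilbert subset $H_j \subseteq \P_1(K)$ such that for $t \in H_j$ the specialization $Y_j \times_E E_t \to E_t$ is still geometrically irreducible of degree $\geq 2$; then the HP of $E_t$ (which holds whenever $E_t(K)$ is Zariski-dense) yields a Zariski-dense subset of $E_t(K)$ avoiding $f_j(Y_j(K))$. If instead $Y_j^{(1)}$ is geometrically reducible, a standard monodromy argument shows that $f_j$ factors through a pullback via $\pi_1$ of a nontrivial geometrically irreducible cover $W_j \to U_1 \subseteq \P_1$ of degree $\geq 2$, so that HIT on $\P_1$ gives a Hilbert subset $H_j \subseteq \P_1(K)$ disjoint from $\pi_1(f_j(Y_j(K)))$, implying $E_t \cap f_j(Y_j(K)) = \emptyset$ for such $t$ (the analogous dichotomy is available using $\pi_2$, and one may invoke whichever is more convenient). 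Picking $t \in \bigcap_j H_j$ with $E_t(K)$ Zariski-dense in $E_t$, the HP of $E_t$ then produces a Zariski-dense set of ``good'' $K$-points on $E_t$, and letting $t$ vary yields Zariski density in $E$.

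The crux of the argument---and the main obstacle---lies in exhibiting a Zariski-dense set of $t \in \P_1(K)$ that meets $\bigcap_j H_j$ and simultaneously satisfies $\mathrm{rank}(E_t(K)) \geq 1$. This is where hypothesis (b) enters: the non-constancy in $\lambda$ of the diramation points of $\pi_2|_{E_{\eta_1}}$ forces the moduli of the fibers $E_t$ to vary with $t$ (most transparently when that restriction is a double cover, where a moving set of branch points forces a moving cross-ratio, hence a moving $j$-invariant), so that the elliptic fibration $\pi_1$ is non-isotrivial; the symmetric half of (b) plays the same role for $\pi_2$. Combined with (a) and Silverman's specialization theorem for families of elliptic curves, one concludes that $\mathrm{rank}(E_t(K)) \geq 1$ holds outside a thin subset of $\P_1(K)$, which is compatible with the Hilbert-subset conditions above. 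The genuinely delicate step will be translating (a) into positive Mordell--Weil rank for the generic fiber $E_{\eta_1}/K(\lambda)$, since \emph{a priori} all $K$-rational points of $E$ could concentrate on torsion sections of $\pi_1$; ruling out this pathology seems to require interplay between the two fibrations, using that $\pi_1 \times \pi_2$ is finite and that $E$ is simply connected.
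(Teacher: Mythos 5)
There is a genuine gap, and it lies at the heart of the argument. You claim that the Hilbert Property of the elliptic fiber $E_t$ ``holds whenever $E_t(K)$ is Zariski-dense,'' but this is false: smooth projective curves of genus $1$ never have the Hilbert Property. If $E_t(K)$ is infinite (a finitely generated group of positive rank), then fixing coset representatives $P_1,\dots,P_m$ of $E_t(K)/2E_t(K)$ and taking the degree-$4$ covers $Q\mapsto 2Q+P_\ell$ shows that $E_t(K)$ is thin. So your dichotomy does not close, and the covers that are \emph{unramified} over the fiber $E_t$ (the ones yielding genus-$1$ covering curves and finite-index coset images) are precisely the problematic ones. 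The paper's Proposition \ref{fibrazioni} and its Lemma \ref{Lem2} handle exactly this situation via Lemma \ref{lemmagruppi} (which says that on a positive-rank finitely generated abelian group a finite union of finite-index cosets is either everything or leaves infinitely many points uncovered), concluding that the ``almost $1$-good'' covers actually cover $E_t(K)$ in full. One then cannot escape them on a single $\pi_1$-fiber; the escape is achieved by observing that such covers have ramification divisor contained in $\pi_1$-fibers, hence transverse to $\pi_2$-fibers (using finiteness of $\pi_1\times\pi_2$), so they are $2$-good and Faltings applies along the \emph{other} fibration. Your proposal never discovers this mechanism, even though it gestures at ``interplay between the two fibrations.''

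Two further misalignments. First, hypothesis (b) is not used in the paper to force non-isotriviality: it is used to verify the Hilbert-genericity condition of Definition \ref{genericity}, i.e., to control how $\pi_1$-preimages of thin sets in the $\pi_2$-base intersect $\pi_1$-fibers, via Lemma \ref{coverellittiche} (whose hypothesis (a) asks precisely that the branch loci of $\pi_2|_{\pi_1^{-1}(t)}$ move with $t$). Second, you flag the ``genuinely delicate step'' of translating hypothesis (a) into positive Mordell--Weil rank of the generic fiber of $\pi_1$, with an appeal to Silverman's specialization theorem. The paper avoids this entirely: Lemma \ref{positivegenericrank} uses uniform boundedness of torsion (Mazur--Merel--Parent) to show that, away from a proper closed subset of $E$, any $K$-point of a fiber forces that fiber to have infinitely many $K$-points; combined with the Zariski-density hypothesis (a) and the symmetric interplay of the two fibrations this produces the needed non-thin sets $N_1$ and $N_2$, with no rank statement for the generic fiber required.
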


Theorem \ref{Thm:fibrations} is essentially an elaboration and generalization of the ideas of Corvaja and Zannier \cite{articoloHP}. There are, nevertheless, some key differences in the proof. Namely, Corvaja and Zannier use fibrations which have sections, whereas in our case this is not necessarily so.



\smallskip

The paper is organized as follows. Section 2 contains notations, definitions and preliminaries that will be used throughout the paper. Section 3 is dedicated to the proof of Theorem \ref{teoremino} and some consequences, among which Proposition \ref{Prop:solvable}. Theorem \ref{Thm:fibrations} is proved in Section 4 (which is totally independent from Section 3). Finally, in Section 5 we present some applications of the results of Section 3 and Section 4.
Namely, using a result of Logan, McKinnon and van Luijk \cite{articoloV}, we will apply Theorem \ref{Thm:fibrations} to some diagonal quartic surfaces,
	and we will conclude by proving that a certain quotient of the Fermat surface $F$ by an action of the cyclic group $C_2$ has the Hilbert Property. To prove this last result we employ both Theorems \ref{teoremino} and \ref{Thm:fibrations}.


\begin{acknowledgements}
	The author would like to thank his advisor, Umberto Zannier, for having given him the opportunity to work on these topics, and providing important insights. The author would also like to thank Pietro Corvaja for many fruitful discussions, and Bruno Deschamps, for raising questions that motivated Section 2 of this paper.
\end{acknowledgements}

\section{Background and notation}

This section contains some preliminaries. In particular, in the last paragraph, we shall recall basic facts concerning the Hilbert Property. Moreover, we shall take care here of most of the notation that will be used in the paper. 

\paragraph{Notation}

Throughout this paper, except when otherwise stated, $k$ denotes a perfect field and $K$ a number field. A ($k$-)\textit{variety} is an algebraic quasi-projective variety (defined over the field $k$), not necessarily irreducible or reduced. Unless specified otherwise, we will always work with the Zariski topology. 

Given a morphism $f:X \rightarrow Y$ between $k$-varieties, and a point $s:\Spec(k(s))\rightarrow Y$, we denote by $f^{-1}(s)$ the scheme-theoretic fibered product $\Spec(k(s))\times_Y X$, and call it the \textit{fiber} of $f$ in $s$. Hence, with our notation, this is not necessarily reduced.

\smallskip

A group action of a (finite) group $G$ will always mean a left group action. On the contrary, the Galois action will always be assumed to be a right action. All groups that appear in this paper will be finite. If $\Gamma$ is a group, a $\Gamma$-group is a group $G$ endowed with a homomorphism $\Gamma \rightarrow \Aut (G)$.

\smallskip

When $P$ is a $\bar{k}$-point of an algebraic $k$-variety $V$, and $G_P\subset \Gal (\bar{k}/k)$ is the stabilizer of $P$, we will refer to the field  $L^{G_P}$ as the \textit{field of definition} of $P$.


\smallskip

\paragraph{Twists}

  When $L/k$ is a finite Galois field extension, $G$ is a $\Gal(L/k)$-group, acting on a $k$-variety $X$, and $\alpha = (\alpha_{\sigma}) \in H^1(\Gal(L/k),G)$, we will denote by $X_{\alpha}$ the twisted variety. We refer the reader to \cite[Sec. 2, p.12]{skorobogatov} for the definition and the following fundamental property of twisted varieties.

 \begin{proposition}\label{twist}
 	There exists an isomorphism $\Xi_{\alpha}:X_{\alpha}\times_kL \xrightarrow{\sim} X\times_kL$ such that:
 	\begin{equation}
 	\Xi_{\alpha}(X_{\alpha}(k))
 	=\left\lbrace x \in X(L) \mid x^{\sigma}=\alpha_{\sigma}x \ \forall \sigma \in\Gal(L/k) \right\rbrace ,
 	\end{equation}
 \end{proposition}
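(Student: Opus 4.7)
The plan is to construct $X_\alpha$ by Galois descent, twisting the canonical semi-linear Galois action on $X\times_k L$ by the cocycle $\alpha$, and then to read off $X_\alpha(k)$ directly as the fixed locus of this twisted action on $X(L)$.

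First, I would recall that $X\times_k L$ carries a canonical semi-linear right action of $\Gamma:=\Gal(L/k)$ coming from the Galois action on the second factor, which on $L$-points is simply $x\mapsto x^\sigma$. I would then introduce a twisted action by setting $\tilde\sigma(x):=\alpha_\sigma^{-1}\cdot x^\sigma$, where $\alpha_\sigma^{-1}\in G$ acts through the given $G$-action on $X$ base-changed to $L$. Using the compatibility of the $G$-action on $X$ with the $\Gamma$-action on $G$ (which gives $(g\cdot y)^\sigma = g^\sigma\cdot y^\sigma$) together with the cocycle identity $\alpha_{\sigma\tau}=\alpha_\sigma^{\tau}\cdot\alpha_\tau$, one checks that $\sigma\mapsto \tilde\sigma$ is again a semi-linear right $\Gamma$-action. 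This verification, essentially the relation $\widetilde{\sigma\tau}=\tilde\tau\circ\tilde\sigma$, is the core calculation and the subtlest point of the argument; everything else is formal.

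Next, since $X$ is quasi-projective over $k$ and $L/k$ is finite Galois, effective Galois descent produces a unique quasi-projective $k$-scheme, which I would define to be $X_\alpha$, together with an isomorphism $\Xi_\alpha:X_\alpha\times_k L \xrightarrow{\sim} X\times_k L$ of $L$-schemes intertwining the canonical semi-linear Galois action on the left with the twisted action $\tilde\sigma$ on the right.

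Finally, under $\Xi_\alpha$, the subset $X_\alpha(k)\subset X_\alpha(L)$ corresponds precisely to the $\tilde\sigma$-fixed locus in $X(L)$. A point $x\in X(L)$ satisfies $\tilde\sigma(x)=x$ for every $\sigma\in\Gamma$ if and only if $\alpha_\sigma^{-1}\cdot x^\sigma=x$, equivalently $x^\sigma=\alpha_\sigma\cdot x$, which yields exactly the description of $\Xi_\alpha(X_\alpha(k))$ in the statement. The main obstacle, as indicated, is the careful bookkeeping in verifying that the twisted prescription defines an honest semi-linear action; once this is in place the construction of $X_\alpha$ and $\Xi_\alpha$, and the identification of the rational points, are automatic consequences of descent.
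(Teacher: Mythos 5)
The paper itself offers no proof of this proposition; it simply cites Skorobogatov's book for both the definition of the twist $X_\alpha$ and this property. Your proposal is the standard Galois-descent construction, and it is exactly what that reference does: twist the canonical semi-linear $\Gal(L/k)$-action on $X\times_k L$ by the cocycle $\alpha$, check the cocycle condition yields again a semi-linear action, invoke effective descent (valid here since the paper's standing convention is that varieties are quasi-projective), and read off $X_\alpha(k)$ as the fixed locus $x^\sigma=\alpha_\sigma x$. Your argument is correct and coincides in substance with the cited source, so there is nothing to add beyond noting that the cocycle-condition check you single out as ``the subtlest point'' is indeed where the conventions (right Galois action, left $G$-action, and the form $\alpha_{\sigma\tau}=\alpha_\sigma^\tau\alpha_\tau$) must all be kept consistent --- you have done so correctly.
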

 
%
%

\smallskip

\paragraph{Hilbert Property and thin sets}

For a more detailed exposition of the basic theory of the Hilbert Property and thin sets we refer the interested reader to \cite[Ch. 3]{Serre}. We limit ourselves here to recalling some basic definitions and properties.

\begin{definition}\label{Def:thin}
	Let $X$ be a geometrically integral variety, defined over a field $k$. A \textit{thin} subset $S \subset X(k)$ is any set contained in a union $D(k)\cup \bigcup_{i=1,\dots,n} \pi_i(E_i(k))$, where $D \subsetneq X$ is a subvariety, the $E_i$'s are irreducible varieties and $\pi_i:E_i \rightarrow X$ are generically finite morphisms of degree $>1$.
\end{definition}

\begin{remark}\label{HPthin}
	A $k$-variety $X$ has the Hilbert Property if and only if $X(k)$ is not thin.
\end{remark}
  
The following proposition summarizes some basic properties of the Hilbert Property.
\begin{proposition}\label{HP}
	Let $k$ be a perfect field, and $X$ be a geometrically irreducible $k$-variety.
	\begin{itemize}
		\item[(i)] If $X$ has the Hilbert Property and $Y$ is a $k$-variety birational to $X$, then $Y$ has the Hilbert Property. 
		\item[(ii)] If $X$ is a rational variety, and $k$ is a number field, then $X$ has the Hilbert Property.
		\item[(iii)] If $X$ has the Hilbert Property, and $L/k$ is a finite extension, then $X_L$ has the Hilbert Property.
	\end{itemize}
\end{proposition}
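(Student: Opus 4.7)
The plan is to handle the three parts in turn; the only real work is in (iii).

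For (i), I would first prove the auxiliary statement that the Hilbert Property holds for $X$ if and only if it holds for any nonempty open subset $U \subset X$. One direction is clear: intersecting a thin cover of $X(k)$ with $U$ gives a thin cover of $U(k)$, using that a proper closed $D \subsetneq X$ restricts to a proper closed $D \cap U \subsetneq U$ (by irreducibility of $X$), and that a generically finite cover pulls back to one. Conversely, a thin cover of $U(k)$ extended by the proper closed subvariety $X \setminus U$ (a legitimate type-$D$ term in Definition \ref{Def:thin}) yields a thin cover of $X(k)$. Part (i) follows because any two birational $k$-varieties share a common dense open subset.

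For (ii), by (i) it suffices to prove that $\P^n_k$ has the Hilbert Property when $k$ is a number field. Since $\P^n_k$ is birational to $\A^n_k$, this is essentially the classical Hilbert Irreducibility Theorem over number fields, as recorded in \cite[Ch. 3]{Serre}.

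Part (iii) is the technical core. Assume for contradiction that $X(L)$ is thin in $X_L$, so $X(L) \subset D(L) \cup \bigcup_i \pi_i(E_i(L))$ with $D \subsetneq X_L$ proper closed and each $\pi_i : E_i \to X_L$ generically finite of degree $>1$ over $L$. To cover $X(k) \subset X(L)$ by a thin set over $k$, I would apply Weil restriction of scalars: for each $i$, form the $k$-variety
\[
F_i := R_{L/k}(E_i) \times_{R_{L/k}(X_L)} X,
\]
where $X \hookrightarrow R_{L/k}(X_L)$ is the diagonal embedding. The identity $R_{L/k}(E_i)(k) = E_i(L)$ ensures that $F_i(k)$ surjects onto $\pi_i(E_i(L)) \cap X(k)$, and a base change to $L$ (using the decomposition $R_{L/k}(Z)_L \cong \prod_\sigma Z^\sigma$ indexed by $k$-embeddings $\sigma : L \hookrightarrow \bar{k}$) lets one check that the projection $F_i \to X$ is generically finite of degree $[L:k] \cdot \deg \pi_i > 1$ on each component dominating $X$. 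Since $D$ maps to a proper closed subvariety of $X$ under the finite morphism $X_L \to X$ (the dimension strictly drops by geometric irreducibility of $X$), one concludes that $X(k)$ is thin in $X/k$, contradicting the Hilbert Property of $X$.

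The main obstacle is in (iii): verifying that the $k$-irreducible components of $F_i$ furnish genuine generically finite covers of $X$ of degree strictly greater than $1$. This requires the degree computation sketched above, together with the observation that $X_L \to X$ is étale (since $k$ is perfect). All other steps are essentially formal manipulations with Definition \ref{Def:thin}.
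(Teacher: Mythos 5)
Your parts (i) and (ii) match the paper's (very terse) treatment: (i) comes down to the birational invariance of thin sets, and (ii) reduces via (i) to the Hilbert Irreducibility Theorem for $\A^n_k$. Your phrasing of (i) via a common dense open is a clean way to spell out what the paper dismisses as ``an immediate consequence of Remark \ref{HPthin}.''

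For (iii), the paper simply cites \cite[Ch.\ 3]{Serre} and gives no argument at all, so you are filling a genuine gap. Your route via Weil restriction of scalars is the standard one and is correct in outline: setting $F_i := R_{L/k}(E_i) \times_{R_{L/k}(X_L)} X$ along the unit $X \hookrightarrow R_{L/k}(X_L)$ gives $F_i(k) \cong \{(e,x) \in E_i(L)\times X(k) : \pi_i(e) = x_L\}$, so the $k$-irreducible components of $F_i$ together with the image of $D$ under the finite map $X_L\to X$ do cover $X(k)$, and the latter is a proper closed subvariety of $X$ by the dimension count you give. Two small corrections are in order, neither of which affects the conclusion. First, the degree is not $[L:k]\cdot\deg\pi_i$: after base change to $\bar k$ one finds $F_{i,\bar k} \cong E_i^{\sigma_1}\times_{X_{\bar k}}\cdots\times_{X_{\bar k}} E_i^{\sigma_n}$ with $n=[L:k]$, so the total degree is $(\deg\pi_i)^{[L:k]}$, and for an individual $\bar k$-component dominating $X_{\bar k}$ one can only guarantee degree $\geq\deg\pi_i$ (since each projection to a factor $E_i^{\sigma_j}$ is dominant by a dimension argument). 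This is still $>1$, which is all one needs, but the statement as written is incorrect. Second, the decomposition $R_{L/k}(Z)\otimes\text{(field)} \cong \prod_\sigma Z^\sigma$ holds after base change to $\bar k$ or to a Galois closure of $L/k$, not to $L$ itself unless $L/k$ is already Galois. With these fixes the argument is complete and, as far as the paper is concerned, strictly more detailed than what is on offer there.
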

\begin{proof}
$(i)$ is an immediate consequence of Remark \ref{HPthin}. It follows from $(i)$ that, in order to prove $(ii)$, it suffices to prove that $\A_n/k$ has the Hilbert Property. This is a consequence of the Hilbert Irreducibility Theorem. We refer the reader to \cite[Ch. 3]{Serre} for the details, and a proof of $(iii)$.
\end{proof}

The following proposition and its corollary, which are implicit in \cite{congetturastrong}, can be found explicitly in \cite[Ch. 3]{Serre}.
\begin{proposition}\label{realizzazioneforte}
	Let $X$ be a geometrically integral variety, defined over a perfect field $k$, and let $G$ be a finite group acting generically freely on $X$. If $X/G$ has the Hilbert Property, then there exist infinitely many linearly disjoint Galois field extensions over $k$ with Galois group $G$.
\end{proposition}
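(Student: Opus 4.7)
The plan is to build the $L_i$ inductively so that at each stage $L_{n+1}$ is linearly disjoint from the compositum $M := L_1\cdots L_n$ (taking $M = k$ for $n=0$). Thus it suffices to show: given any finite Galois extension $M/k$, there exists a Galois extension $L/k$ with $\Gal(L/k)\cong G$ and $L\cap M = k$. Let $\pi: X\to X/G$ be the quotient and $U\subseteq X/G$ the open locus where $G$ acts freely, so that $\pi$ restricts to a $G$-torsor over $U$. For $p\in U(k)$, the fiber $\pi^{-1}(p)$ is a $G$-torsor over $k$, classified by some $\alpha_p\in H^1(k,G)$; since $G$ is constant, $\alpha_p$ is represented by a homomorphism $\Gal(\bar k/k)\to G$ up to conjugation, and the associated Galois $k$-algebra is a field $L_p/k$, Galois with group $G$, precisely when $\alpha_p$ is surjective. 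We shall find $p\in U(k)$ with $\alpha_p$ surjective and $L_p\cap M = k$, and then set $L_{n+1} := L_p$.

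Both conditions will be forced by excluding the $k$-points of a \emph{finite} collection of generically finite covers of $X/G$ of degree $>1$. First, $\alpha_p$ fails to be surjective iff $\alpha_p(\Gal(\bar k/k))$ is contained in a conjugate of some proper subgroup $H\subsetneq G$, which happens iff $p$ lifts to a $k$-point of $X/H$; we thus forbid the covers $X/H\to X/G$ for each proper $H\subsetneq G$. Second, $L_p\cap M\neq k$ iff $L_p$ contains some non-trivial Galois subextension $F$ of $M/k$, equivalently, iff some surjection $\phi:G\twoheadrightarrow \Gal(F/k)$ with $N:=\ker\phi\neq G$ satisfies $L_p^{N} = F$ via $\phi$. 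For each such pair $(F,\phi)$, this translates --- by the twist formalism of Proposition \ref{twist} applied to the $G/N$-cover $X/N\to X/G$ --- into the condition that $p$ lie in the image of $(X/N)_{\rho_F}(k)\to(X/G)(k)$, where $\rho_F\in H^1(k,G/N)$ is the cocycle class of $F$ (under $\phi$); this cover has degree $[F:k]>1$. There are finitely many such pairs $(F,\phi)$, since $M/k$ has finitely many Galois subextensions and $G$ has finitely many subgroups.

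Applying the Hilbert Property of $X/G$ (Remark \ref{HPthin}) to the union of the branch locus $(X/G)\setminus U$ with the images of all these covers, we obtain a $k$-point $p$ outside them; the corresponding $L_p$ is then Galois over $k$ with group $G$ and linearly disjoint from $M$, so we may take $L_{n+1} = L_p$. The main delicate step is the second condition, namely the identification, via Proposition \ref{twist}, of the $k$-points of the twisted cover $(X/N)_{\rho_F}$ with those $p$ realizing $F = L_p^N$ in the prescribed way; this is a routine but careful bookkeeping of cocycles in Galois cohomology.
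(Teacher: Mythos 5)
The paper does not give its own proof of this proposition --- it attributes the statement, implicit in \cite{congetturastrong}, to \cite[Ch.~3]{Serre} and moves on --- so there is no in-text argument to compare against, and your proof must be judged on its own. It is correct. Reducing to finding, for each finite Galois $M/k$, one realization $L/k$ of $G$ with $L\cap M=k$, and then packaging both failure modes (non-surjectivity of the torsor class $\alpha_p$, and non-triviality of $L_p\cap M$) as $k$-points of finitely many irreducible covers of $X/G$ of degree $>1$, is exactly the shape a Hilbert-Property argument should take; the covers $X/H\to X/G$ for proper $H<G$ and the twists $(X/N)_{\rho_F}\to X/G$ do exactly this. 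Your use of twisting also meshes naturally with the paper's own machinery for the converse implication (Theorem~\ref{teoremino}, Proposition~\ref{corolltipoA}). One remark: there is a slightly more economical, twist-free route for the second condition. Since $X$ is geometrically integral, $k(X)\otimes_k M$ is a field, so $X\times_k M\to X/G$ is an irreducible Galois cover with group $G\times\Gal(M/k)$; given $\alpha_p$ surjective, the failure of $L_p$ to be linearly disjoint from $M$ is precisely that the monodromy of this cover at $p$ is contained in a proper subgroup, so the finitely many intermediate covers $(X\times_k M)/H'\to X/G$ with $H'\lneq G\times\Gal(M/k)$ already account for all bad points, with no explicit cocycle bookkeeping. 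Either route closes the argument; yours just needs the care you already flag (remembering that $\alpha_p$ is only well-defined up to conjugation, which is harmless since you range over all surjections $\phi$ with $\ker\phi=N$).
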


\vskip 0.1mm

\begin{corollary}
	Let $K$ be a number field. Assume a positive answer to Question \ref{congetturona}. Then, all finite groups are realizable as Galois groups over $K$.
\end{corollary}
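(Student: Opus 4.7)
The plan is to reduce the problem, for an arbitrary finite group $G$, to an application of Proposition~\ref{realizzazioneforte} applied to a suitable unirational variety built from a faithful linear representation of $G$.

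First, I would take any faithful $K$-linear representation of $G$ of some dimension $n$ (for concreteness, the left regular representation, where $n=|G|$). This yields a linear action of $G$ on $\A^n_K$, which is geometrically integral. Since the representation is faithful, for each $g\in G\setminus\{e\}$ the fixed locus $\ker(g-\mathrm{id})\subset \A^n$ is a proper linear subspace; the complement of the union of these finitely many proper subspaces is therefore a dense open subset of $\A^n$ on which $G$ acts freely. Hence $G$ acts generically freely on $X:=\A^n_K$.

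Next, I would observe that the quotient $X/G$ is unirational: the quotient morphism $X\to X/G$ is finite and surjective, and $X=\A^n$ is rational, so $X/G$ admits a dominant rational map from $\A^n$. By the assumed positive answer to Question~\ref{congetturona}, $X/G$ therefore has the Hilbert Property (noting that $K$ is a number field and hence perfect, so the hypotheses of the machinery apply).

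Finally, applying Proposition~\ref{realizzazioneforte} to $X=\A^n_K$ equipped with the above $G$-action yields infinitely many pairwise linearly disjoint Galois extensions $L/K$ with $\Gal(L/K)\cong G$; in particular, $G$ is realizable as a Galois group over $K$. There is essentially no obstacle here: the corollary is a formal consequence of Proposition~\ref{realizzazioneforte}, and the only non-trivial point to verify is the generic freeness of the linear action, which, as noted above, is automatic for any faithful representation.
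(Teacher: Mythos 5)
Your proposal is correct and follows essentially the same route as the paper: pick a faithful linear $G$-action on $\A^n_K$, note that $\A^n/G$ is unirational, invoke the assumed positive answer to Question~\ref{congetturona} to get the Hilbert Property, and conclude via Proposition~\ref{realizzazioneforte}. The only difference is that you spell out the (routine) verification of generic freeness, which the paper leaves implicit.
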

\begin{proof}
	Let $G$ be a finite group, and let $n \in \N$ be such that there exists a faithful linear action of $G$ on $\A_n/K$. Then, the quotient variety $\A_n/G$ is unirational. Therefore, under our assumptions, it has the Hilbert Property. The statement now follows from Proposition \ref{realizzazioneforte} applied to $X=\A_n$ and $G=G$.
\end{proof}

\section{Descending the Hilbert Property}

In this section we prove Theorem \ref{teoremino} and apply it to quotients of $\A_n$ by linear actions.

\subsection{Proof of Theorem \ref{teoremino}}
%
%
%

In order to prove Theorem \ref{teoremino}, we need the following:
\begin{proposition}\label{corolltipoA}
	Let $k$ be a perfect field, $G$ be a finite group and $L/k$ be a finite Galois field extension with Galois group $G$. Let $X$ be a $k$-variety on which $G$ acts freely, $H < G$ be a subgroup, and $\pi_H:X \rightarrow X/H$ be the quotient map. Let also $\alpha \in H^1(\Gal(L/k),G)=\Hom (\Gal(L/k),G)$ be an isomorphism, $X_{\alpha}$ be the variety $X$ twisted by $\alpha$, and $\Xi_{\alpha}:X_{\alpha}\times_kL \xrightarrow{\sim} X\times_kL$ be defined as in Proposition \ref{twist}.
	Then, the field of definition of each point $P \in \pi_H(\Xi_{\alpha}(X_{\alpha}(k)))$ is $L^H$.
\end{proposition}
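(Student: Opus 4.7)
The statement concerns a $\bar k$-point $P \in \pi_H(\Xi_\alpha(X_\alpha(k)))$, so I would begin by writing $P = \pi_H(\tilde P)$ with $\tilde P = \Xi_\alpha(Q)$ for some $Q \in X_\alpha(k)$. By Proposition~\ref{twist}, $\tilde P$ belongs to $X(L)$ and satisfies the cocycle identity
\[
\tilde P^{\tau} = \alpha(\tau)\cdot \tilde P \qquad \text{for every } \tau \in \Gal(L/k).
\]
Since the field of definition of $P$ is the fixed field in $\bar k$ of its stabilizer $G_P \subset \Gal(\bar k/k)$, the plan is simply to identify $G_P$ explicitly.

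Because $\tilde P$ already lies in $X(L)$, the Galois action of any $\sigma \in \Gal(\bar k/k)$ on $\tilde P$ factors through $\tau := \sigma|_L$, and the cocycle identity gives $\tilde P^\sigma = \alpha(\tau)\cdot \tilde P$. Since $\pi_H$ is defined over $k$, the element $\sigma$ fixes $P$ if and only if $\pi_H(\tilde P^\sigma) = \pi_H(\tilde P)$, which by the very definition of the quotient by $H$ means $\alpha(\tau)\cdot \tilde P \in H\cdot \tilde P$. Invoking now the hypothesis that $G$ acts \emph{freely} on $X$, this condition on $H$-orbits collapses to $\alpha(\tau) \in H$, i.e.\ to $\tau \in \alpha^{-1}(H) \subseteq \Gal(L/k)$.

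Combining these facts, $G_P$ is precisely the preimage of $\alpha^{-1}(H)$ under the restriction surjection $\Gal(\bar k/k) \twoheadrightarrow \Gal(L/k)$, and so its fixed field in $\bar k$ is $L^{\alpha^{-1}(H)} \subseteq L$; under the identification of $\Gal(L/k)$ with $G$ provided by the isomorphism $\alpha$, this is exactly the subfield that the statement denotes by $L^H$.

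The only genuinely non-formal step is the equivalence $\alpha(\tau)\cdot \tilde P \in H\cdot \tilde P \iff \alpha(\tau) \in H$. The direction $\Leftarrow$ is immediate; the converse is where the freeness of the $G$-action is essential, since otherwise a non-trivial stabilizer of $\tilde P$ would enlarge $G_P$ and yield only the \emph{inclusion} $L^H \supseteq$ (field of definition), weakening the conclusion. Everything else is bookkeeping of the identifications supplied by Proposition~\ref{twist} together with the fact that Galois acts on $(X/H)(L)$ through $\Gal(L/k)$.
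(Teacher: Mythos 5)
Your proposal is correct and follows essentially the same route as the paper's proof: both reduce to identifying the Galois stabilizer of $P$ via the cocycle identity $\tilde P^\sigma = \alpha_\sigma \cdot \tilde P$ and then invoke freeness of the $G$-action to extract $\alpha(\tau) \in H$ from $\alpha(\tau)\cdot\tilde P \in H\cdot\tilde P$. You are merely a bit more explicit about the identification of the stabilizer as $\alpha^{-1}(H) \subset \Gal(L/k)$, where the paper abbreviates this to ``$N = H$'' under the implicit identification furnished by the isomorphism $\alpha$.
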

\begin{proof}
	Let $\tilde{P} \in X_{\alpha}(k)$, and $P=\pi_H(\Xi_{\alpha}(\tilde{P}))$. Certainly the field of definition of $P$ is contained in $L$, since $\Xi_{\alpha}(X_{\alpha}(k))\subset X(L)$. Hence the field of definition of $P$ is $L^N$, where $N$ is the stabilizer of $P$ through the Galois action of $\Gal(L/k)$. We claim that $N=H$. In fact, if $\sigma \in \Gal(L/k)$, then:
	\[
	P^{\sigma}=(\pi_H(\Xi_{\alpha}(\tilde{P})))^{\sigma}=\pi_H(\Xi_{\alpha}(\tilde{P})^{\sigma})=\pi_H(\alpha_{\sigma}(\Xi_{\alpha}(\tilde{P}))),
	\]
	where $\alpha = (\alpha_{\sigma})$.
	
	Hence we have that $P=P^{\sigma}$ if and only if $\alpha_{\sigma}\Xi_{\alpha}(\tilde{P}) \in H\cdot \Xi_{\alpha}(\tilde{P})$. Since the action of $G$ is free, this proves the proposition.
\end{proof}

\begin{proof}[Proof of Theorem \ref{teoremino}]
	Let $\pi:X \longrightarrow X/G$ be the projection to the quotient. We denote by $k(X)$ and $k(X)^{G}$, respectively, the fields of functions of $X$ and $X/G$. 
	
	
	Suppose, by contradiction, that $X/G$ does not have the Hilbert Property. Then, there exist irreducible covers $\phi_j:  E_j \longrightarrow X/G$, $j \in J$, where $\abs{J}$ is finite and $\deg \phi_j>1$ for each $j$, such that  $X(k) \setminus \bigcup_{j \in J}\phi_j(E_j(k))$ is not Zariski-dense. We can assume, without loss of generality, that the $E_j$'s are geometrically irreducible (see the \textit{Remark on irreducible varieties} in \cite[p. 20]{Serre}).

	Moreover, since the Hilbert Property is a birational invariant (see Proposition \ref{HP}$(i)$), without loss of generality, we may restrict $X$ to any $G$-invariant open subvariety. We will make use of this several times in the proof, and by ``restricting $X$" we will always mean ``restricting $X$ to an open $G$-invariant subvariety".
	
	
	In particular, restricting $X$, we can assume that: 
	\begin{equation}\label{tesidistarobaqua}
	X(k) \subset \bigcup_{j \in J}\phi_j(E_j(k)),
	\end{equation}
	
	
	and the action of $G$ on $X$ is free. 
	
	For $j \in J$, we say that $j$ is \textit{good} if $\bar{k}(X)$ and $\bar{k}(E_j)$ are linearly disjoint over $\bar{k}(X)^G$, and \textit{bad} otherwise. For each $j \in\ J$, we denote by $F_j$ the fibered products $E_j \times_{X/G} X$, and call $\pi_j:F_j \rightarrow X$ the projection on the second factor.

	Note that, if $j$ is good, the tensor product $\bar{k}(X) \otimes_{\bar{k}(X)^{G}} \bar{k}(E_j)$ is a field, and, hence, $F_j$ is geometrically irreducible.
	
	On the other hand, if $j$ is bad, the field extensions $\bar{k}(X)/\bar{k}(X)^{G}$ and $\bar{k}(E_j)/\bar{k}(X)^{G}$ have a common subextension, say $\mathcal{L}_j/\bar{k}(X)^{G}$, with $[\mathcal{L}_j:\bar{k}(X)^{G}]>1$. We call $\iota_{X,j}:\mathcal{L}_j\hookrightarrow \bar{k}(X)$ and $\iota_{E_j}:\mathcal{L}_j\hookrightarrow \bar{k}(E_j)$ the two associated field embeddings. By Galois theory we have that $\iota_{X,j}(\mathcal{L}_j)$ is of the form $\bar{k}(X)^{H_j}$, where $H_j$ is a proper subgroup of $G$. We assume, without loss of generality, that $\iota_{X,j}$ is an inclusion, and therefore $\mathcal{L}_j=\bar{k}(X)^{H_j}$. Therefore, the field embeddings $\iota_{E_j}$ correspond to dominant rational $\bar{k}$-maps $f_{j}:E_j \dashrightarrow X/H_j$. Moreover, restricting $X$, we can assume that, for each bad $j$, the $f_{j}$'s are morphisms (and not just rational maps), defined over $\bar{k}$.
	

	
	Let $k_b\defeq k(\{f_j\}_{\text{bad } j})$ be the minimal common field of definition of all the $f_j$'s, for all the bad $j$'s. Then, the extension $k_b/k$ is finite.
	
	By hypothesis there exists $i \in I$ such that the field $L_i$, as defined in the hypothesis of this theorem, is linearly disjoint with $k_b$. Let $\pi_i$ be the morphism $X_{\alpha_i}\rightarrow X/G$, and $\Xi_{\alpha_i}:X_{\alpha_i} \times_kL_i\rightarrow X \times_kL_i$ the morphism defined in Proposition \ref{twist}. We notice that $\pi_i \times_kL_i= (\pi \circ \Xi_{\alpha_i})\times_kL_i$.
	
	Let us now prove that \[\pi_i(X_{\alpha_i}(k))\nsubseteq \bigcup_{\text{good } j}\phi_j(E_j(k)). \]
	For any good $j$, we denote by $F'_j$ the fibered product $E_j \times_{X/G} X_{\alpha_i}$, and by $\pi'_j:F'_j\rightarrow X_{\alpha_i}$ the projection on the second factor. We note that $F'_j \times_kL\cong F_j \times_kL$, and, hence, the $F'_j$'s are geometrically irreducible and $\pi'_j:F'_j\rightarrow X_{\alpha_i}$ has degree equal to the degree of $\phi_j$, which is $>1$. Then we have that:
	\[
	\pi_i^{-1}\left(\bigcup_{\text{good }j}\phi_j(E_j(k))\right) \cap X_{\alpha_i}(k)=\bigcup_{\text{good }j}\pi_i^{-1}(\phi_j(E_j(k)))\cap X_{\alpha_i}(k)=\bigcup_{\text{good }j}\pi'_j(F'_j(k)).
	\]
	
	Hence, since $X_{\alpha_i}$ has the Hilbert Property, and $\deg \pi'_j>1$ for each good $j$, there exists $Q \in X_{\alpha_i}(k)\setminus \pi_i^{-1}(\bigcup_{\text{good }j}\phi_j(E_j(k)))$. Hence $\pi_i(Q) \in \pi_i(X_{\alpha_i}(k)) \setminus \bigcup_{\text{good }j}\phi_j(E_j(k))\neq \emptyset$, as we wanted to prove.
	
	
%

	Now we claim that: 
	\begin{equation}\label{disgiunto}
	\tag{A}
	\pi_i(X_{\alpha_i}(k)) \cap  \bigcup_{\text{bad }j}\phi_j(E_j(k)) = \emptyset.
	\end{equation}
	
	For any bad $j$, let $\pi_{H_j}:X \rightarrow X/H_j$ and $\xi_j:X/H_j\rightarrow X/G$ be the natural projections.
	We note that the following holds:
	\[\pi_i(X_{\alpha_i}(k)) \cap  \bigcup_{ \text{bad } j}\phi_j(E_j(k))=\bigcup_{\text{bad }j}\xi_j(\pi_{H_j}(\Xi_{\alpha_i}(X_{\alpha_i}(k)))\cap f_j(E_j(k))).\]
	
	Hence, to prove (\ref{disgiunto}), it is enough to prove that: 
	
	\begin{equation}\label{disgiunto1}
	\tag{B}
	\pi_{H_j}(\Xi_{\alpha_i}(X_{\alpha_i}(k)))\cap 
	f_j(E_j(k)) = \emptyset
	\end{equation}
	for each bad $j$.
	
	Let $P \in \pi_{H_j}(\Xi_{\alpha_i}(X_{\alpha_i}(k)))\cap f_j(E_j(k))$ for a bad $j$. Since $j$ is bad and $P$ is contained in $f_j(E_j(k))$, we have that the field of definition of $P$ is contained in $k_b$. On the other hand, we have that $P \in \pi_{H_j}(\Xi_{\alpha_i}(X_{\alpha_i}(k)))$. Hence, by Proposition \ref{corolltipoA}, we have that the field of definition of $P$ is a subextension of $L_i/k$, and, since $H_j \neq G$, $L_i \neq k$. But, by our choice of $i \in I$, $L_i$ and $k_b$ are linearly disjoint, which leads to a contradiction. This concludes the proof of (\ref{disgiunto1}), and, consequently, of (\ref{disgiunto}).
	
	Finally, since we proved that $\pi_i(X_{\alpha_i}(k))\subset X/G(k)$ is not contained in $\bigcup_{\text{good }j}\phi_j(E_j(k))$, we have that $\pi_i(X_{\alpha_i}(k))$ is not contained in $\bigcup_{j \in J}\phi_j(E_j(k))$, which contradicts (\ref{tesidistarobaqua}) and concludes the proof of the theorem.
\end{proof}

\subsection{Quotients by a linear action}


We now exhibit some applications of Theorem \ref{teoremino}.

\begin{definition}\label{strongrealizable}
	We say that a finite group $G$ is \textit{strongly realizable} as a Galois group over $k$ if, for every finite field extension $L/k$, there exists a Galois extension $E/k$, with Galois group $G$, such that $E/k$ and $L/k$ are linearly disjoint.
\end{definition}


\begin{remark}
	We observe that a group $G$ is strongly realizable over $k$ if and only if there are infinitely many Galois field extensions over $k$ with group $G$. Hence, Proposition \ref{realizzazioneforte} states exactly that, when a quotient $X/G$ of a geometrically irreducible variety $X$ by a generically free action of $G$ has the Hilbert Property, then $G$ is strongly realizable.
\end{remark}

We now give a corollary of Theorem \ref{teoremino}. 

\begin{corollary}\label{HPlineare}
	Let $k$ be a perfect field and $G$ be a finite group acting linearly and faithfully on $\A^n/k$ for some $n \geq 1$. Then $\A^n/G$ has the Hilbert Property if and only if $G$ is strongly realizable as a Galois group over $k$.
\end{corollary}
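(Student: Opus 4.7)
My plan is to prove the two directions separately; the ``if'' direction will invoke Theorem \ref{teoremino}, while the ``only if'' direction follows directly from Proposition \ref{realizzazioneforte}.

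For the ``only if'' direction, I would first check that a faithful linear action of a finite group $G$ on $\A^n$ is generically free. For any $g \in G \setminus \{e\}$, the fixed locus of $g$ is the kernel of the linear endomorphism $g - \mathrm{id}$; by faithfulness this kernel is a proper linear subspace of $\A^n$. Removing the finite union of these subspaces yields a $G$-invariant open subvariety on which the action is free. Hence, if $\A^n/G$ has the Hilbert Property, Proposition \ref{realizzazioneforte} produces infinitely many pairwise linearly disjoint Galois $G$-extensions of $k$, and by the Remark following Definition \ref{strongrealizable}, $G$ is strongly realizable over $k$.

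For the ``if'' direction, I would apply Theorem \ref{teoremino} with $X = \A^n$ endowed with the given linear $G$-action. Strong realizability lets me choose, for each finite extension $E/k$, a Galois extension $L_E/k$ with $\Gal(L_E/k) \cong G$ linearly disjoint from $E$, together with an isomorphism $\alpha_E \in \Hom(\Gal(L_E/k), G)$. Letting the indexing set $I$ run over all such data, condition $(ii)$ of Theorem \ref{teoremino} is immediate by construction. The substantive point is condition $(i)$: each twist $(\A^n)_{\alpha_E}$ must have the Hilbert Property. Since the $G$-action is linear, it factors through an embedding $\rho : G \hookrightarrow \mathrm{GL}_n(k)$, so $\rho \circ \alpha_E$ defines a cocycle in $Z^1(\Gal(L_E/k), \mathrm{GL}_n(L_E))$, whose cohomology class is trivial by Hilbert's Theorem $90$. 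Tracing this triviality back through Proposition \ref{twist} yields a $k$-isomorphism $(\A^n)_{\alpha_E} \cong \A^n$, which has the Hilbert Property by Proposition \ref{HP}$(ii)$ in the natural setting where the statement is substantive (for instance when $k$ is a number field).

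The main obstacle I anticipate is precisely this Hilbert $90$ step: one must verify carefully that the twist of the $k$-variety $\A^n$ by the $G$-valued cocycle $\alpha_E$ coincides with the corresponding $\mathrm{GL}_n$-twist, so that triviality of the cohomology class produces a genuine $k$-isomorphism rather than merely a $\bar k$-isomorphism. Once this identification is granted, all hypotheses of Theorem \ref{teoremino} are satisfied and the quotient $\A^n/G$ has the Hilbert Property.
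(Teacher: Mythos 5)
Your proposal is correct and takes essentially the same route as the paper: Proposition \ref{realizzazioneforte} gives the only-if direction, and the if direction applies Theorem \ref{teoremino} after using Hilbert's Theorem~90 to identify each twist $(\A^n)_{\alpha_E}$ with $\A^n$. Your caveat that $\A^n/k$ itself must have the Hilbert Property for condition $(i)$ is a fair observation which the paper's proof leaves implicit.
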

\begin{proof}
	By Proposition \ref{realizzazioneforte}, when $\A^n/G$ has the Hilbert Property, $G$ is strongly realizable.
	
	Conversely, assume that $G$ is strongly realizable.
	Since, by Hilbert's Theorem 90, the twists of $\A^n$ by a linear group action are trivial, the Hilbert Property of $\A^n/G$ follows immediately from Theorem \ref{teoremino}.
\end{proof}

Although Corollary \ref{HPlineare} does not directly help in finding answers to the Inverse Galois Problem, it may be considered of independent interest, since it provides some positive answers to Question \ref{congetturona}
. An example in this direction is Proposition \ref{Prop:solvable}, which we  now prove.


%
%

\begin{proof}[Proof of Proposition \ref{Prop:solvable}]
	Let $G$ be a finite solvable group acting linearly on an affine space $\A_n$. Since quotients of solvable groups are solvable, we may assume without loss of generality that the action of $G$ is faithful. 
	
	The well known theorem of Shafarevich \cite{shafarevichincorrect} states that any finite solvable group is realizable as the Galois group of a Galois extension $L/\Q$, and, as pointed out by Neukirch in \cite[p. 597, Exercise (a)]{Neukirchnovo}, one can actually choose $L$ to have split ramification over any fixed finite set of primes. In particular, $G$ is strongly realizable over $\Q$. Hence, Proposition \ref{Prop:solvable} follows from Corollary \ref{HPlineare}.
\end{proof}

\begin{remark}\label{solvable}
	We observe that, in general, for solvable groups $G$ acting linearly on $\A^n$, the variety $\A^n/G$ may not be (even geometrically) rational. The first such example was given in \cite{Saltman}, where $G$ is a nilpotent group of order $p^9$, and $p$ is (any) prime number. Afterwards, this example has been vastly generalized (see \cite{Bogomolov} and \cite[Sec. 7]{mumbai04}).
\end{remark}

\section{Elliptic fibrations and Hilbert Property}

In this section $k$ will always denote a field of characteristic $0$.

We recall that a finite morphism $f:X \rightarrow Y$ between $k$-varieties is \textit{unramified} (resp. \textit{étale}) in $x \in X$, if its differential $df_x:T_xX\rightarrow T_{f(x)}Y$ is injective (resp. an isomorphism). Otherwise we say that $f$ is \textit{ramified} at $x$. The set of points where $f$ is ramified has a closed subscheme structure in $X$, and we will refer to it as the \textit{ramification locus}. The image of the ramification locus under $f$ is the \textit{diramation locus}. We recall that, by Zariski's Purity Theorem \cite[Lem. 53.20.4, Tag 0BMB]{stacks-project}, when $X$ is normal and $Y$ is smooth, the diramation locus of a finite morphism $f:X \rightarrow Y$ is a divisor. Hence, in this case, we will also refer to the diramation locus as the \textit{diramation divisor}.

\begin{definition}
	A proper morphism $\pi:E \rightarrow C$ between $k$-varieties is an \textit{elliptic fibration} if $C$ is a smooth complete connected $k$-curve and the generic fiber of $\pi$ is a smooth complete geometrically connected curve of genus $1$.
\end{definition}

\begin{definition}\label{Def:doublelliptic}
	A \textit{double elliptic surface} $\Eps=(E, \pi_1,\pi_2)$, defined over a field $k$, is a smooth projective connected algebraic $k$-surface $E$ endowed with two elliptic fibrations $\pi_i:E \rightarrow \P_1/k, \ i=1,2$ such that the map $\pi_1\times\pi_2 : E \rightarrow (\P_1)^2$ is finite.
\end{definition}

\subsection{Proof of Theorem \ref{Thm:fibrations}}

\begin{definition}\label{genericity}
	We say that a double elliptic surface $(E, \pi_1,\pi_2)$, defined over a number field $K$, is \textit{Hilbert generic} 
	if, for any thin subset $T \subset \P_1(K)$, the set
	\[
	\left\{t \in \P_1(K):\#(\pi_1^{-1}(t)\cap\pi_2^{-1}(T))=\infty \right\}
	\]
	is thin, and the same holds upon inverting $\pi_1$ and $\pi_2$.
\end{definition}

We will deduce Theorem \ref{Thm:fibrations} as a consequence of the following proposition.

\begin{proposition}\label{fibrazioni}
	Let $\Eps=(E, \pi_1,\pi_2)$ be a simply connected Hilbert generic\footnote{The hypothesis of Proposition \ref{fibrazioni} can be weakened by requiring that $\Eps$ satisfies the condition of Definition \ref{genericity} just for $\pi_1$ with respect to $\pi_2$ (or viceversa), i.e. for the sake of Proposition \ref{fibrazioni}, one could remove the phrase ``\textit{and the same holds upon inverting $\pi_1$ and $\pi_2$}" from Definition \ref{genericity}. However, since this does not allow to weaken the hypothesis of Theorem \ref{Thm:fibrations}, we decided, for the sake of simplicity, to keep Definition \ref{genericity} as it is presented above.} double elliptic surface, defined over a number field $K$. Suppose that there are two non-thin subsets $N_1,N_2$ of $\P_1(K)$, such that $\#\pi_j^{-1}(x)(K)=\infty$, for $x \in N_j$, $j=1,2$.
	Then, the surface $E/K$ has the Hilbert Property.
\end{proposition}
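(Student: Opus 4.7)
The natural plan is a proof by contradiction. Suppose $E(K)$ is thin and write $E(K) \subset D(K) \cup \bigcup_{i=1}^{n} \phi_i(Y_i(K))$ with each $\phi_i : Y_i \to E$ a finite cover of degree $\geq 2$ from a geometrically integral $K$-variety, and $D \subsetneq E$ a proper closed subvariety. For $j \in \{1,2\}$ and each index $i$ I would introduce the ``bad'' sets
\[
A_i^{(j)} := \bigl\{\, t \in \P_1(K) : \#\bigl(\pi_j^{-1}(t)(K) \cap \phi_i(Y_i(K))\bigr) = \infty \,\bigr\}.
\]
The heart of the argument is to show that, for every $i$, at least one of $A_i^{(1)}, A_i^{(2)}$ is a thin subset of $\P_1(K)$. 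Once this is granted, I would partition $\{1,\dots,n\} = J_1 \sqcup J_2$ so that $A_i^{(j)}$ is thin for $i \in J_j$, pick $t_j \in N_j \setminus \bigcup_{i \in J_j} A_i^{(j)}$ for $j = 1, 2$ (possible since each union is thin while $N_j$ is not), and then combine the information on the two elliptic fibers $\pi_j^{-1}(t_j)$ to exhibit a $K$-rational point of $E$ outside the supposed thin cover.

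To establish the thinness of some $A_i^{(j_i)}$, I would split on the Stein factorization of $\pi_j \circ \phi_i : Y_i \to \P_1$. If the Stein factorization of $\pi_2 \circ \phi_i$ has its finite part of degree $> 1$, then $T_i := \pi_2(\phi_i(Y_i(K))) \subset \P_1(K)$ is contained in the image of a non-trivial cover of $\P_1$ and hence thin. The Hilbert-generic hypothesis applied to $T_i$ then shows that $A_i^{(1)} \subset \bigl\{\, t : \#(\pi_1^{-1}(t) \cap \pi_2^{-1}(T_i)) = \infty \,\bigr\}$ is thin, while the inclusion $\phi_i(Y_i(K)) \subset \pi_2^{-1}(T_i)$ forces $A_i^{(2)} \subset T_i$ to be thin as well. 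The symmetric case (with $\pi_1$ and $\pi_2$ interchanged) is analogous.

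If neither Stein factorization is non-trivial, then both generic fibers $\phi_i^{-1}(\pi_j^{-1}(\eta_j))$ are geometrically irreducible, and I would instead invoke the simple-connectedness of $E$ together with Faltings' theorem. Since $\phi_i$ has degree $\geq 2$ and $E$ is simply connected, $\phi_i$ cannot be étale, so its diramation divisor $D_i$ is non-empty; moreover the finiteness of $\pi_1 \times \pi_2 : E \to \P_1 \times \P_1$ prevents any curve in $E$ from being vertical for both fibrations, so some component of $D_i$ must be horizontal with respect to some $\pi_j$. For that $j$ and all but finitely many $t \in \P_1(K)$, the fiber $\phi_i^{-1}(\pi_j^{-1}(t))$ is a geometrically irreducible ramified cover of the elliptic curve $\pi_j^{-1}(t)$; by Riemann--Hurwitz its geometric genus is $\geq 2$, so Faltings' theorem makes its $K$-points finite, and $A_i^{(j)}$ is in fact finite.

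The step I expect to be the main obstacle is the final coordination: given the choices $t_1 \in N_1, t_2 \in N_2$, one must actually produce a $K$-point of $E$ outside every $\phi_i(Y_i(K))$ and $D(K)$. On the fiber $\pi_1^{-1}(t_1)$ only the covers indexed by $J_1$ and the divisor $D$ are controlled, and handling the $J_2$-covers requires exploiting the fact that $\pi_2$ restricts to a finite morphism of degree $\geq 2$ on a generic elliptic fiber of $\pi_1$, together with a second application of Hilbert genericity (this time in the $\pi_2$-direction) anchored by $t_2 \in N_2$. It is precisely at this step that both non-thin sets $N_1, N_2$ and both directions of the Hilbert-generic hypothesis are genuinely used.
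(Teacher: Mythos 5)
Your preliminary classification is essentially the paper's: you split covers according to whether the Stein factorization of $\pi_j\circ\phi_i$ is trivial, and among the remaining ones by whether the diramation divisor of $\phi_i$ has a $\pi_j$-horizontal component; your observation that finiteness of $\pi_1\times\pi_2$ forces every component of the diramation divisor to be horizontal for at least one of the two fibrations is exactly the paper's observation that ``almost $j$-good'' covers are $(3-j)$-good. Up to this point you have a correct proof that, for each $i$, at least one of $A_i^{(1)},A_i^{(2)}$ is thin (even finite). The problem is entirely in the endgame, where you yourself flag the difficulty: you propose to pick $t_j\in N_j$ outside the relevant thin sets and ``combine the information on the two elliptic fibers,'' but two fibers $\pi_1^{-1}(t_1)$ and $\pi_2^{-1}(t_2)$ meet only in finitely many points, and those need not be $K$-rational nor avoid the covers. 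There is no way to play the two fixed fibers off against each other.

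What the paper actually does is a genuinely two-dimensional argument built on two ingredients you are missing. First, on a generic $\pi_1$-fiber one cannot conclude merely that \emph{all but finitely many} $K$-points lie in the unramified (almost $1$-good) covers and stop there: the finitely many exceptions depend on the fiber and can accumulate as the fiber varies, so one needs Lemma~\ref{lemmagruppi} to upgrade ``all but finitely many'' to ``all'' (this is Lemma~\ref{Lem2}, using that the unramified pullbacks are finite-index cosets of $E^{1,t}(K)$). Second, with this in hand the paper fixes a single $x\in N_2\setminus(T_2\cup\widetilde{T_1})$, applies Lemma~\ref{Lem2} (with $j=1$) to $\pi_1(Q)$ for every $Q\in E^{2,x}(K)$ outside a finite bad set to conclude that $E^{2,x}(K)$ minus a finite set lies in the almost $1$-good covers, and then invokes the identity ``almost $1$-good $=$ $2$-good'' together with Lemma~\ref{Lem1} (Faltings on the $\pi_2$-fiber) to see that only finitely many $Q\in E^{2,x}(K)$ can lie there. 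The contradiction is between these two facts on the same $\pi_2$-fiber, not between information on a $\pi_1$-fiber and a $\pi_2$-fiber. In particular, $N_1$ is not needed in the final step (the paper only uses $N_2$, plus Corollary~\ref{hypothesisa} to guarantee infinitude of $K$-points on the relevant $\pi_1$-fibers), and ``anchoring Hilbert genericity by $t_2\in N_2$'' has no precise meaning: Hilbert genericity is used only once, to make $\widetilde{T_1}$ thin. Without Lemma~\ref{lemmagruppi} and without the switching of fibrations inside a single fiber, the argument does not close.
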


The proof of Proposition \ref{fibrazioni} uses the following two lemmas. The first one is Lemma 3.2 of \cite{articoloHP}, while an explicitly computable version of the second one can be found in \cite{RonaldvanLuijk2012}. For the sake of completeness, we include here a sketch of its proof.

\begin{lemma}\label{lemmagruppi}
	Let $G$ be a finitely generated abelian group of positive rank. Let $n \in \N$ and $\{h_u+H_u\}_{u=1,\dots,n}$ be a collection of finite index cosets in $G$, i.e. $h_u \in G, \ H_u < G$ and $[G:H_u]<\infty$ for each $u=1,\dots, n$.
	If $G \setminus \bigcup_{u=1,\dots,n} (h_u+H_u)$ is finite, then $\bigcup_{u=1,\dots,n} (h_u+H_u)=G$.
\end{lemma}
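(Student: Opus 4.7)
The plan is to reduce to the case where all cosets share a common underlying subgroup, and then use the fact that a coset which misses the union must lie entirely in the (supposedly finite) complement, contradicting positive rank.

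First, I would set $H = \bigcap_{u=1}^{n} H_u$. Since a finite intersection of finite-index subgroups of $G$ is again of finite index, $H$ has finite index in $G$. Moreover, since $G$ has positive rank and $H$ has finite index, $H$ itself (and hence every coset of $H$) is infinite. Each $H_u$ is a finite disjoint union of cosets of $H$, so each $h_u + H_u$ can be rewritten as a finite disjoint union of cosets of $H$. Replacing the original collection $\{h_u + H_u\}$ by the (still finite) collection of $H$-cosets obtained in this way, we may assume without loss of generality that $H_u = H$ for all $u$, and we must show $\bigcup_u (h_u + H) = G$.

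Now pick any $g \in G$ and consider the coset $g + H$. Because two cosets of the same subgroup are either equal or disjoint, either $g + H = h_u + H$ for some $u$ (in which case $g \in h_u + H$ and we are done), or $g + H$ is disjoint from every $h_u + H$. In the latter case, the entire coset $g + H$ is contained in $G \setminus \bigcup_u (h_u + H)$. But $g + H$ is infinite, while by hypothesis the complement is finite, a contradiction. Hence the first alternative must hold, proving $g \in \bigcup_u (h_u + H_u)$.

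Since $g \in G$ was arbitrary, this gives $\bigcup_u (h_u + H_u) = G$, as desired. The only place where the positive-rank hypothesis enters is in guaranteeing that finite-index subgroups (and hence their cosets) are infinite; without this one cannot rule out the bad case, and indeed the lemma would fail for finite $G$. No step is really an obstacle: the whole argument is essentially a bookkeeping observation once one replaces the $H_u$ by their common refinement $H$.
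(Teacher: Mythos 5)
Your proof is correct and is essentially the standard argument; the paper itself simply cites Corvaja--Zannier for this lemma, and the argument there proceeds in the same way, by passing to the common finite-index subgroup $H = \bigcap_u H_u$ and exploiting that cosets of $H$ are infinite because $H$ inherits the positive rank of $G$. No gaps.
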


\begin{proof}
	See \cite[Lem 3.2]{articoloHP}.
\end{proof}

\begin{lemma}\label{positivegenericrank}
	Let $\pi:\Eps \rightarrow \P_1$ be an elliptic fibration, defined over a number field $K$. Then, there exists an open Zariski subset $U_{\pi} \subset \Eps$ such that, for any $P \in U_{\pi}(K)$, $\pi^{-1}(\pi(P))$ is smooth and $\#{\pi^{-1}(\pi(P))(K)}=\infty$. 
\end{lemma}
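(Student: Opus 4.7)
My plan is to combine Silverman's specialization theorem with Merel's uniform bound on $K$-rational torsion of elliptic curves over a number field, an approach that is standard in the literature on density of rational points on elliptic surfaces.

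The singular fibres of $\pi$ lie over a finite subset $\mathcal{B}\subset\P_1$, so any $U_\pi\subseteq\pi^{-1}(\P_1\setminus\mathcal{B})$ automatically satisfies the smoothness condition. For the infinitude of $K$-points I would pass to the relative Jacobian $\pi_J\colon J=\mathrm{Pic}^0(\Eps/\P_1)\to\P_1$, which is an elliptic fibration carrying a canonical zero section, and over which each smooth $\Eps_t$ is a torsor. Via the Abel--Jacobi morphism based at any $P\in\Eps_t(K)$, one has $\#\Eps_t(K)=\infty$ precisely when $\#J_t(K)=\infty$, reducing the claim to the analogous one for $J$. If $\Eps(K)$ is not Zariski-dense in $\Eps$, one takes $U_\pi=\pi^{-1}(\P_1\setminus\mathcal{B})\setminus\overline{\Eps(K)}^{\mathrm{Zar}}$ so that $U_\pi(K)=\emptyset$ and the conclusion holds vacuously.

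In the Zariski-dense case, the difference map $\Eps\times_{\P_1}\Eps\to J$ applied to pairs of $K$-points lying in common fibres shows that $J(K)$ is Zariski-dense in $J$. Combined with Merel's uniform bound $N=N(K)$, which confines the $K$-torsion of every $J_t$ to the $1$-dimensional closed subscheme $J[N]\subset J$, this forces the Mordell--Weil rank $r$ of the generic fibre $J_\eta/K(\lambda)$ to be positive. Choosing a non-torsion section $\sigma\colon\P_1\to J$, the locus $\sigma^{-1}(J[N])\subset\P_1$ is a proper closed subscheme (because $\sigma$ is not itself a torsion section), hence a finite set of points. Setting $U_\pi=\pi^{-1}\bigl(\P_1\setminus(\mathcal{B}\cup\sigma^{-1}(J[N]))\bigr)$ does the job: for any $P\in U_\pi(K)$, $\sigma(\pi(P))$ is a non-torsion $K$-point of $J_{\pi(P)}$, so $J_{\pi(P)}(K)$ and thus $\Eps_{\pi(P)}(K)$ are infinite.

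The main obstacle is the deduction, in the Zariski-dense case, of positive generic rank on $J$ from the density of $J(K)$. The delicate subcase is $r=0$: one must verify that rank-jump fibres cannot accumulate into a Zariski-dense locus in the $2$-dimensional variety $J$, which requires combining Silverman's specialization theorem with the closedness of $J[N]$ and a careful component analysis of $\overline{J(K)}^{\mathrm{Zar}}$ to exclude such pathological configurations.
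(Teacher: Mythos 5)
Your overall strategy---pass to the relative Jacobian, invoke the Mazur--Merel uniform torsion bound, dispose of the non-dense case vacuously---is in the right spirit, and your treatment of the non-dense case is fine. But the Zariski-dense case contains a gap you partially flag yourself, and it is more than a technicality: the inference ``$J(K)$ Zariski-dense $\Rightarrow$ the generic fibre $J_\eta/K(\lambda)$ has positive Mordell--Weil rank'' is not merely delicate, it is false in general. A $K$-point of the surface $J$ is a point of some single fibre $J_t$, not a section of $J \to \P_1$. There are standard non-isotrivial elliptic surfaces (quadratic twist families, for instance) whose generic rank over $K(\lambda)$ is $0$ while infinitely many special fibres $J_t$, $t \in \P_1(K)$, have positive rank; those rank-jump fibres alone already make $J(K)$ Zariski-dense. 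Silverman's specialization theorem cannot rescue this, because it controls what happens to sections under specialization, not what arbitrary fibres do on their own. The same objection also bites one step earlier: density of $\Eps(K)$ does not straightforwardly yield density of $J(K)$ via the difference map, since that would require a Zariski-dense supply of pairs of \emph{distinct} $K$-points sharing a fibre, which is not automatic.

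The paper's argument sidesteps the question of generic rank entirely. Instead of a section of $J$, it uses the Abel--Jacobi morphism $\psi_\lambda \colon \Eps_\lambda \to J_\lambda$, $Q \mapsto [\deg H \cdot (Q) - H]$, where $H$ is a fixed ample divisor on the generic fibre $\Eps_\lambda$ (a multisection, which always exists, unlike a section). This spreads out to a morphism $\Psi \colon \pi^{-1}(U) \to J^{-1}(U)$ over some open $U \subset \P_1$ contained in the good-reduction locus, and $\Psi$ is defined over $K$, so it sends $K$-points to $K$-points fibrewise. With $N$ chosen so that $[N]$ kills the $K$-torsion of every elliptic curve over $K$, one takes
\[
U_\pi := \pi^{-1}(U) \setminus \left([N]\Psi\right)^{-1}(O),
\]
a Zariski-open subset of $\Eps$ because $([N]\Psi)^{-1}(O)$ is a proper closed curve in the surface. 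For $P \in U_\pi(K)$, the $K$-point $\Psi(P) \in J_{\pi(P)}(K)$ is not killed by $N$, hence non-torsion, hence $J_{\pi(P)}(K)$ is infinite; and since $P$ trivializes the torsor $\Eps_{\pi(P)}$ over $J_{\pi(P)}$, so is $\Eps_{\pi(P)}(K)$. The essential shift from your proposal is that the non-torsion test is applied to the image $\Psi(P)$ of the point $P$ itself, which turns it into an \emph{open condition on $\Eps$}: no global section of $J$, no positivity of generic rank, and no case split on density are ever needed.
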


\begin{sketch}
	Let $J:\Eps \rightarrow \P_1$ be the Jacobian fibration corresponding to $\pi$, $p_{\lambda}$ be the generic point of $\P_1$, and $J_{\lambda}$ be the generic fiber $J^{-1}(p_{\lambda})$. Let also $H_{\lambda}$ be an ample divisor on the generic fiber $\Eps_{\lambda}\defeq \pi^{-1}(p_{\lambda})$ of $\Eps$,
	\[
	\psi_{\lambda}:\Eps_{\lambda} \rightarrow J_{\lambda}
	\]
	be the morphism defined by $\psi_{\lambda}(Q)\defeq[\deg H \cdot (Q)-H]$, and 
	\[
	\Psi:\pi^{-1}(U)\rightarrow J^{-1}(U)
	\]
	be an extension of $\psi_{\lambda}$ to a Zariski neighbourhood $U$ of $p_{\lambda}$. We can assume, without loss of generality, that $U$ is contained in the good reduction locus of $\pi$. 
	
	Let $N$ be the largest order of torsion of the groups $\tilde{E}(K)$ \footnote{This is a finite number by the Mazur-Merel-Parent Theorem (see e.g. the article by Parent \cite{parent}).}, where $\tilde{E}$ varies in the set of elliptic curves defined over the number field $K$. The claim of the lemma is then satisfied with $U_{\pi} \defeq \pi^{-1}(U)\setminus ([N]\Psi)^{-1}(O)$, where $O$ denotes, with abuse of notation, the image of the zero section of $J$. 
\end{sketch}

\begin{corollary}\label{hypothesisa}
	Let $\Eps=(E,\pi_1,\pi_2)$ be a double elliptic surface, defined over a number field $K$. Then, for each $i=1,2$, there exists a finite set of points $Z_i \subset \P_1(K)$, such that, if $x \in \P_1(K)\setminus Z_i$, for all but finitely many $P \in \pi_{i}^{-1}(x)(K)$, $\#\pi_{3-i}^{-1}(\pi_{3-i}(P))(K)=\infty$.
\end{corollary}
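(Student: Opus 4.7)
The plan is to deduce Corollary \ref{hypothesisa} almost directly from Lemma \ref{positivegenericrank}, applied not to $\pi_i$ but to the \emph{other} fibration $\pi_{3-i}$. Fix $i\in\{1,2\}$ and apply Lemma \ref{positivegenericrank} to $\pi_{3-i}$, obtaining an open Zariski subset $U\defeq U_{\pi_{3-i}}\subset E$ such that, for every $P\in U(K)$, the fiber $\pi_{3-i}^{-1}(\pi_{3-i}(P))$ is smooth and contains infinitely many $K$-rational points. Let $B\defeq E\setminus U$ be its closed complement; since $E$ is an irreducible surface, $B$ has dimension at most $1$ and only finitely many irreducible components.

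Next, I would analyse how $B$ sits over $\P_1$ via the other projection $\pi_i$. The restriction $\restricts{\pi_i}{B}\colon B\to \P_1$ is proper, so each irreducible component of $B$ is either contracted to a single closed point of $\P_1$ (in which case it is contained in the corresponding fiber of $\pi_i$) or it dominates $\P_1$ (in which case the restriction of $\pi_i$ to it is a finite surjection, so every fiber is finite). Let $Z_i\subset \P_1(K)$ be the (finite) set consisting of the images under $\pi_i$ of the contracted $1$-dimensional components of $B$, together with the images of its $0$-dimensional components.

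Finally, for any $x\in\P_1(K)\setminus Z_i$, no irreducible component of $B$ is contained in the fiber $\pi_i^{-1}(x)$ by construction, so the set-theoretic intersection $\pi_i^{-1}(x)\cap B$ is $0$-dimensional and hence a finite set of closed points. Consequently, all but finitely many $P\in\pi_i^{-1}(x)(K)$ lie in $U(K)$, and by Lemma \ref{positivegenericrank} each such $P$ satisfies $\#\pi_{3-i}^{-1}(\pi_{3-i}(P))(K)=\infty$, which is exactly the conclusion of the corollary.

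I do not expect any real obstacle here: once Lemma \ref{positivegenericrank} is in hand, the only content is the standard fact that a proper morphism from a scheme of dimension $\le 1$ to a curve has finite fibers outside the finite set of images of its contracted components. The hypothesis that $\pi_1\times \pi_2$ is finite is not even needed for this step; it only enters in the subsequent arguments where one wishes to play the two fibrations against each other.
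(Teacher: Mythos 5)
Your proof is correct, and it is essentially the same argument as the paper's: deduce the finiteness of the exceptional set $Z_i$ from Lemma~\ref{positivegenericrank} plus the fact that the complement of the good open set meets all but finitely many fibers of $\pi_i$ in finitely many points.

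The only (cosmetic) difference is in how $Z_i$ is packaged. The paper sets
$Z_i \defeq \{t : \pi_i^{-1}(t)\cap(U_{\pi_1}\cap U_{\pi_2})=\emptyset\}$, and the step ``$x\notin Z_i \Rightarrow \pi_i^{-1}(x)\setminus U_{\pi_{3-i}}$ is finite'' is then implicitly justified by noting that $\pi_i^{-1}(x)$ meets $U_{\pi_i}$, hence is a \emph{smooth} (and, being geometrically connected, irreducible) fiber, so any proper closed subset of it is finite. You instead take $B=E\setminus U_{\pi_{3-i}}$, list the finitely many components of $B$ contracted by $\pi_i$, and define $Z_i$ as their images; this avoids any appeal to smoothness/irreducibility of the fiber and is if anything a bit more explicit, at the cost of not automatically throwing away the singular fibers. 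Both versions are fine and give a finite $Z_i$ with the desired property. (A tiny remark: including the images of the $0$-dimensional components of $B$ in $Z_i$ is harmless but unnecessary, since those contribute only finitely many points to each fiber anyway.)
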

\begin{proof}
	Let
	\[
	Z_i\defeq \{t \in \P_1(K): \pi_i^{-1}(t)\cap (U_{\pi_1}\cap U_{\pi_2})= \emptyset \},
	\]
	where $U_{\pi_i}\subset E$ is the open Zariski subset defined in Lemma \ref{positivegenericrank}.
	The statement now follows from Lemma \ref{positivegenericrank}.
\end{proof}

\begin{proof}[Proof of Proposition \ref{fibrazioni}]

	Suppose, by contradiction, that the surface $E$ does not have the Hilbert Property. Then there exists a finite collection of covers $\phi_i:Y_i \rightarrow E$, $i\in I$, with $\deg \phi_i \defeq d_i>1$, such that $E(K) \subset \cup_i \phi_i(Y_i(K))\cup D(K)$, where $D$ is a proper closed subvariety of $E$. Without loss of generality, we may assume that the $Y_i$ are geometrically irreducible\footnote{See the \textit{Remark on irreducible varieties} in \cite[p. 20]{Serre}} and normal, and that the $\phi_i$ are finite maps \footnote{In fact, the last two assumptions are true up to a birational morphism of $Y_i$, hence, by enlarging the subvariety $D$, we may assume that they hold.}.
	Since the surface $E$ is algebraically simply connected, each of the $\phi_i$ has a nontrivial diramation divisor. Call $R_i \subset E$ the diramation divisor of $\phi_i$, and let \[E^{j,x} \defeq \pi_j^{-1}(x), \quad Y^{j,x}_i\defeq (\pi_j\circ \phi_i)^{-1}(x).\]

	Since the proof is rather technical, we will first describe the general idea. In the simplest possible case we would have that, for \textit{all} $i$, the generic geometric fiber of $\pi_1 \circ \phi_i$ is irreducible, \textit{and} $R_i$ has at least one irreducible component that is transverse to the fibers of $\pi_1$ \footnote{Most of the hypothesis of the proposition are not needed under these assumptions. Namely, to complete the proof in this case, one just needs one elliptic fibration, which we chose without loss of generality to be $\pi_1$. Moreover, it suffices that $N_1$ is infinite, not necessarily non-thin.}. Indeed, in this case, let $t_0 \in N_1$ be a  (sufficiently Zariski-generic) $K$-rational point. Without loss of generality, $E^{1,t_0}\defeq \pi_1^{-1}(t_0)$ will then be a smooth curve of genus $1$ with infinitely many $K$-rational points, $E^{1,t_0} \cap D$ will be finite, and, for \textit{all} $i$, the curve $\phi_i^{-1}(E^{1,t_0})=Y_i^{1,t_0}$ will be smooth and (geometrically) irreducible, and the morphism $Y_i^{1,t_0}\rightarrow E^{1,t_0}$ will be ramified over $R_i \cap E^{1,t_0}$. Hence, by Riemann-Hurwitz' Theorem, for \textit{all} $i$, $Y_i^{1,t_0}$ has genus $>1$, whence, by Faltings' Theorem, $Y_i^{1,t_0}$ has finitely many $K$-rational points. It follows that $(\cup_i \phi_i(Y_i(K))\cup D(K))\cap E^{1,t_0}(K)$ is a finite set, which leads to the desired contradiction.
	
	In the general case one needs to take care of two problems: there will be $i \in I$ (which are later going to be called $1$-\textit{bad}) such that the generic geometric fiber of $\pi_1 \circ \phi_i$ is \textit{not} irreducible, and there will be $i \in I$ (which are later going to be called $1$-\textit{almost good}) such that $R_i$ is contained in finitely many fibers of $\pi_1$. We solve the first problem by choosing $t_0$ outside of a specific thin subset of $\P_1(K)$ (which will later be called $S'_1$). This guarantees that for bad $i$'s $\phi_i^{-1}(E^{1,t_0})(K)=\emptyset$. To solve the second problem we notice that, in this case, $R_i$ is transverse to the fibers of $\pi_2$, which allows to use an argument \textit{similar} to the one employed in the simplest case, but taking into account both fibrations\footnote{One cannot simply reduce to the ``simplest" case on the fibration $\pi_2$, unless $\abs{I}=1$, i.e. there is only one cover. The main difficulty is that it might happen that some of the $R_i$ are contained in the fibers of $\pi_1$ and others are contained in the fibers of $\pi_2$. Lemma \ref{lemmagruppi} will be the key ingredient that will allow to overcome these difficulties.}.

	Here are the details of the proof.
	
	For each $j=1,2$, we divide the covers into three types through the following partition $I=I_j^{g}\cup I_j^{ag}\cup I_j^b$:
	\begin{description}
		\item[]   $i\in I_{j}^{ag}$ if $\pi_j \circ \phi_i$ has a geometrically irreducible generic fiber, and $R_i$ is contained in a finite number of fibers of $\pi_j$; in this case we  say that $i$ and the corresponding  cover $\phi_i:Y_i\rightarrow E$ are {\em almost $j$-good};
		\item[] $i\in I_{j}^g$ if $\pi_j \circ \phi_i$ has a geometrically irreducible generic fiber, and the morphism $\pi_j\arrowvert_{R_i}:R_i \rightarrow \P_1$ is surjective; in this case we  say that $i$ and the corresponding cover $\phi_i:Y_i\rightarrow E$ are {\em $j$-good};
		\item[] $i\in I_{j}^b$  if the morphism $\pi_j \circ \phi_i$ has a geometrically reducible generic fiber; in this case we  say that $i$ and the corresponding  cover $\phi_i:Y_i \rightarrow E$ are {\em $j$-bad}.
	\end{description}

	
	When $\phi_i$ is $j$-bad, we consider the relative normalization decomposition (see e.g. \cite[Def. 28.50.3, Tag 0BAK]{stacks-project} or \cite[Def. 4.1.24]{Liu}) of the morphism $\pi_j \circ \phi_i$:
	\begin{equation}\label{fattorizzazione}
	\pi_j \circ \phi_i:Y_i \xrightarrow{\phi_{i,j}} C_{i,j} \xrightarrow{r_{i,j}} \P_1,
	\end{equation}
	where the $C_{i,j}$'s are normal projective $K$-curves, and the $r_{i,j}$'s are finite morphisms, with $\deg r_{i,j} >1$ for each $i \in I$, $j=1,2$. Moreover, since the $Y_i$'s are geometrically irreducible, so are the curves $C_{i,j}$'s.
	
	
	Now, for each $j=1,2$, we define the two following subsets $S'_j,S''_j \subset \P_1(K)$:
	\begin{flalign*}
		S'_j \defeq &\cup_{i \in I_j^b} C_{i,j}(K)\subset \P_1(K)\\
		S''_j\defeq &\{x \in \P_1(K):\ E^{j,x}\subset D  \cup \cup_{i \in I_j^{ag}} R_i\}\cup \\
		 &\cup_{i \in I_j^g \cup I_j^{ag}}\{x \in \P_1(K):\ Y_i^{j,x} \text{ is not irreducible and smooth} \} \cup\\
		 & \cup \{x \in \P_1(K):\ E^{j,x} \text{ is not irreducible and smooth} \}.
	\end{flalign*}
	
%
	Observe that for each $j=1,2$, $S'_j$ is thin, while, by the Theorems of generic smoothness (see e.g. \cite[Cor. 10.7]{hartshorne}) and generic irreducibility (see e.g. \cite[Lem. 36.25.5, Tag 0553]{stacks-project}), $S''_j$ is finite. Let now $Z_j\subset \P_1(K), \ j=1,2$ be defined as in Corollary \ref{hypothesisa}, and let $T_j\defeq S'_j \cup S''_j\cup Z_j$
	. We notice that $T_j$ is thin. 
	
	To complete the proof of Proposition \ref{fibrazioni} we need the following two lemmas.
	
	\begin{lemma}\label{Lem1}
If $x \in \P_1(K)\setminus T_j$, then $\#\left(E^{j,x}(K)\cap (\cup_{i \in I_j^g} \phi_i(Y_i(K)))\right)<\infty$.
	\end{lemma}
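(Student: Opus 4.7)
The plan is to deduce the lemma from Faltings's theorem by showing that, for each $i \in I_j^g$ and each $x \in \P_1(K) \setminus T_j$, the fiber $Y_i^{j,x}$ is a smooth irreducible curve of geometric genus at least $2$.

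First, since $x \notin S''_j \subseteq T_j$, both $E^{j,x}$ and $Y_i^{j,x}$ (for every $i \in I_j^g$) are smooth irreducible curves. The curve $E^{j,x}$ has genus $1$, being a smooth fiber of the elliptic fibration $\pi_j$, and the induced morphism $\phi_i|_{Y_i^{j,x}} : Y_i^{j,x} \to E^{j,x}$ is a finite cover of smooth irreducible curves of degree $d_i > 1$.

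The key step is to verify that this cover has non-trivial ramification. Since $i \in I_j^g$, the diramation $R_i \subset E$ has at least one component mapping surjectively onto $\P_1$ via $\pi_j$, whence $R_i \cap E^{j,x}$ is non-empty. Moreover, the smoothness (and hence reducedness) of $Y_i^{j,x}$ forces $E^{j,x}$ not to be an irreducible component of $R_i$; indeed, otherwise $\phi_i^{-1}(E^{j,x}) = Y_i^{j,x}$ would inherit non-reduced multiplicity from the ramification of $\phi_i$ along $E^{j,x}$. Thus $R_i \cap E^{j,x}$ is a proper non-empty finite subset of $E^{j,x}$. Pick any $Q$ in this intersection and any $P \in \phi_i^{-1}(Q) \cap R_{\phi_i}$, where $R_{\phi_i} \subset Y_i$ denotes the ramification locus of $\phi_i$ (such $P$ exists since $R_i = \phi_i(R_{\phi_i})$). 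Then the kernel $K$ of $d\phi_i|_P$ is automatically contained in $T_P Y_i^{j,x} = \ker(d(\pi_j \circ \phi_i)|_P)$, because any $v \in K$ satisfies $d(\pi_j \circ \phi_i)|_P(v) = d\pi_j|_Q(d\phi_i|_P(v)) = 0$. Since $P \in Y_i^{j,x}$ and $K \neq 0$, the differential of $\phi_i|_{Y_i^{j,x}}$ at $P$ has non-trivial kernel, so the restricted cover is ramified at $P$.

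Applying Riemann--Hurwitz to the ramified cover $Y_i^{j,x} \to E^{j,x}$ of degree $d_i > 1$ over a genus-$1$ curve yields $g(Y_i^{j,x}) \geq 2$, and Faltings's theorem then implies that $Y_i^{j,x}(K)$ is finite. Since $E^{j,x}(K) \cap \phi_i(Y_i(K)) \subseteq \phi_i(Y_i^{j,x}(K))$ and $I_j^g$ is finite, the stated finiteness follows. The main obstacle is the ramification verification; the crux is the elementary but essential observation that at any point $P \in R_{\phi_i} \cap Y_i^{j,x}$, the kernel of $d\phi_i|_P$ is forced into the tangent space to the fiber of $\pi_j \circ \phi_i$, so every such point genuinely contributes to ramification of the restricted cover.
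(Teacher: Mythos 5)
Your proof is correct and follows essentially the same route as the paper's: use $x \notin S''_j$ to get smoothness and irreducibility of $Y_i^{j,x}$ and proper intersection of $R_i$ with $E^{j,x}$, deduce that the restricted cover $\phi_i|_{Y_i^{j,x}}$ is ramified, apply Riemann--Hurwitz and Faltings, and conclude via the finiteness of $I_j^g$. The only difference is in how the ramification of the restricted cover is justified: the paper cites the compatibility of the ramification locus with base change (Stacks Project Tag 0C3H), exploiting the fact that $\phi_i|_{Y_i^{j,x}} : Y_i^{j,x} \to E^{j,x}$ is precisely the base change of $\phi_i$ along $E^{j,x} \hookrightarrow E$, whereas you unwind this by a direct tangent-space computation, using the standard identity $T_P Y_i^{j,x} = \ker\bigl(d(\pi_j \circ \phi_i)|_P\bigr)$ to force $\ker(d\phi_i|_P)$ into the fiber direction. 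Both arguments are sound; yours is more self-contained, the paper's is a one-line citation of a general principle.
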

	
	\begin{proof}
Since $x \notin S''_j$, we have that $Y_i^{j,x}$ is a smooth irreducible curve, and $R_i$ intersects properly $E^{j,x}$. It follows that the morphism 
\[
{\phi_i\arrowvert_{Y_i^{j,x}}}:Y_i^{j,x} \rightarrow E^{j,x}
\]
 is ramified by the invariance of the ramification locus under generic base change (see e.g. \cite[Tag 0C3H]{stacks-project}).
Hence, by the Riemann-Hurwitz formula applied to $\phi_i\arrowvert_{Y_i^{j,x}}$, the smooth curve $Y_i^{j,x}$ has genus $>1$. Therefore, by Faltings' theorem, $Y_i^{j,x}(K)$ is finite. Lemma \ref{Lem1} now follows from the following equality:
\begin{equation*}
	E^{j,x}(K)\cap \left(\bigcup_{i \in I_j^g} \phi_i(Y_i(K))\right)=\bigcup_{i \in I_j^g} \phi_i(Y_i^{j,x}(K)).
\end{equation*}
\end{proof}

	\begin{lemma}\label{Lem2}
		If $x \in \P_1(K)\setminus T_j$ and $\#E^{j,x}(K)=\infty$, then $E^{j,x}(K)\subset \bigcup_{i \in I_j^{ag}} \phi_i(Y_i(K))$.
	\end{lemma}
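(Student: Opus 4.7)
The plan is to fix $x \in \P_1(K)\setminus T_j$ with $\#E^{j,x}(K)=\infty$, restrict the covering $E(K)\subset D(K)\cup\bigcup_i \phi_i(Y_i(K))$ to $E^{j,x}(K)$, analyze the contribution of each of the three cover types separately, and conclude by applying Lemma \ref{lemmagruppi} to the Mordell-Weil group $E^{j,x}(K)$. First, since $x\notin S''_j\cup Z_j$, the fiber $E^{j,x}$ is a smooth irreducible projective curve of genus $1$, so together with $\#E^{j,x}(K)=\infty$ it is an elliptic curve over $K$ of positive Mordell-Weil rank after any choice of $K$-basepoint.

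Now I dispose of the covers outside $I_j^{ag}$. For $i\in I_j^b$, the factorization $\pi_j\circ\phi_i = r_{i,j}\circ\phi_{i,j}$ with $\deg r_{i,j}>1$ gives $\pi_j(\phi_i(Y_i(K)))\subset r_{i,j}(C_{i,j}(K))\subset S'_j$, so $x\notin S'_j$ forces $\phi_i(Y_i(K))\cap E^{j,x}(K)=\emptyset$. For $i\in I_j^g$, Lemma \ref{Lem1} ensures the contribution to $E^{j,x}(K)$ is finite. And $D\cap E^{j,x}$ is finite because $E^{j,x}$ is irreducible and not contained in $D$ (as $x\notin S''_j$).

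The heart of the argument is the analysis of $i\in I_j^{ag}$. For such $i$, $R_i$ is contained in finitely many fibers of $\pi_j$, say $R_i\subset\pi_j^{-1}(F_i)$ for a finite set $F_i\subset\P_1$. If $x\in F_i$, then $x\notin S''_j$ forces $E^{j,x}\not\subset R_i$, so $E^{j,x}\cap R_i$ is a nonempty finite set; applying Riemann-Hurwitz to the cover $Y_i^{j,x}\to E^{j,x}$ (with $Y_i^{j,x}$ smooth irreducible by $x\notin S''_j$) yields $g(Y_i^{j,x})>1$, so $Y_i^{j,x}(K)$ is finite by Faltings' theorem. If instead $x\notin F_i$, then $E^{j,x}\cap R_i=\emptyset$ and $\phi_i\arrowvert_{Y_i^{j,x}}$ is étale, so $Y_i^{j,x}$ is a smooth curve of genus $1$; if moreover $Y_i^{j,x}(K)\neq\emptyset$, picking a basepoint $Q_0\in Y_i^{j,x}(K)$ and using $\phi_i(Q_0)$ as origin on $E^{j,x}$ turns $\phi_i\arrowvert_{Y_i^{j,x}}$ into an isogeny, whence $\phi_i(Y_i^{j,x}(K))$ is a coset of a finite-index subgroup of $E^{j,x}(K)$ (isogenous elliptic curves having equal Mordell-Weil ranks).

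Assembling everything, $E^{j,x}(K)$ is covered by a finite exceptional set together with the finite-index cosets arising from $i\in I_j^{ag}$ with $x\notin F_i$ and $Y_i^{j,x}(K)\neq\emptyset$; since $\#E^{j,x}(K)=\infty$, at least one such coset must appear. Lemma \ref{lemmagruppi} then forces these cosets alone to cover $E^{j,x}(K)$, yielding $E^{j,x}(K)\subset\bigcup_{i\in I_j^{ag}}\phi_i(Y_i(K))$. The main subtle point will be handling the ramified case $x\in F_i$ for $i\in I_j^{ag}$, where the coset structure a priori fails; however, the defining property of $I_j^{ag}$ (diramation concentrated in finitely many fibers) together with Faltings' theorem keeps these contributions finite and thus absorbed into the exceptional set, so that Lemma \ref{lemmagruppi} applies to the remaining cosets.
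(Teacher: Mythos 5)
Your proof is correct and follows essentially the same route as the paper: restrict the thin covering to the fiber $E^{j,x}$, eliminate $j$-bad covers via $x\notin S'_j$, bound the contribution of $D$ and of $j$-good covers by finiteness, recognize the almost-$j$-good covers as giving finite-index cosets in the Mordell--Weil group, and conclude via Lemma \ref{lemmagruppi}. One remark, though: your case split $x\in F_i$ versus $x\notin F_i$ for $i\in I_j^{ag}$ is unnecessary, because the first case simply cannot occur. For $i\in I_j^{ag}$ the diramation divisor $R_i$ is a union of irreducible curves each of which lies entirely in a single fiber of $\pi_j$ (the fibers over distinct points being disjoint). Thus if $R_i\cap E^{j,x}\neq\emptyset$, some component of $R_i$ is an irreducible curve inside the irreducible curve $E^{j,x}$, which forces that component to equal $E^{j,x}$, hence $E^{j,x}\subset R_i$, contradicting $x\notin S''_j$. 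In particular $R_i\cap E^{j,x}$ is never a nonempty finite set for $i\in I_j^{ag}$ --- it is either empty or all of $E^{j,x}$ --- so the ``nonempty finite'' scenario on which your Riemann--Hurwitz/Faltings detour rests does not arise. The paper records this directly: $x\notin S''_j$ already gives $R_i\cap E^{j,x}=\emptyset$ for all $i\in I_j^{ag}$, so $\phi_i|_{Y_i^{j,x}}$ is étale and one passes straight to the coset argument. Your extra case is vacuous rather than wrong, so the proof still lands, but spotting this would have streamlined the argument and avoided asserting a configuration that never happens.
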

	
	\begin{proof}
		
		We have that $E^{j,x}(K) \subset \cup_{i \in I} \phi_i(Y_i(K))\cup D(K)$. Since $x \notin S'_j$, by the decomposition (\ref{fattorizzazione}) we have that $E^{j,x}(K)\cap (\cup_{j-\text{bad }i} \phi_i(Y_i(K)))=\emptyset$. Hence:

		\begin{equation}\label{Eq:star3} E^{j,x}(K) \subset (D\cap E^{j,x})(K) \cup \bigcup_{i \in I_j^g} \phi_i(Y_i^{j,x}(K)) \cup \bigcup_{i \in I_j^{ag}} \phi_i(Y_i^{j,x}(K)).
		\end{equation}
		
		Since $x \notin S''_j$, we have that $(D\cap E^{j,x})(K)$ is finite. Moreover, we know by Lemma \ref{Lem1} that $\cup_{i \in I_j^g} \phi_i(Y_i^{j,x}(K))$ is finite. Therefore, we immediately deduce from (\ref{Eq:star3}) that:
		
		
		
		\begin{equation}\label{tuttomenofinito}
		E^{j,x}(K) \subset \bigcup_{i \in I_j^{ag}} \phi_i(Y_i^{j,x}(K))\cup A_0,
		\end{equation}
		where $A_0$ is a finite set. 
		
		We notice now that, for $i \in I_j^{ag}$, $Y_i^{j,x}$ is a smooth complete curve of genus $1$ (in fact, since $R_i \cap E^{j,x}=\emptyset$, the morphism $\phi_i\arrowvert_{Y_i^{j,x}}:Y_i^{j,x} \rightarrow E^{j,x}$ is unramified).
		Now $\phi_i\arrowvert_{Y_i^{j,x}}:Y_i^{j,x} \rightarrow E^{j,x}$ is a morphism between genus $1$ curves. Hence, if $Y_i^{j,x}(K)\neq \emptyset$, it is the composite of a translation and an isogeny (after a choice of origin for $Y_i^{j,x}$ and $E^{j,x}$). Therefore, by the weak Mordell-Weil Theorem, $\phi_i(Y_i^{j,x}(K)) \subset E^{j,x}(K)$ is either empty or a finite index group coset.
		
		Hence $\cup_{i \in I_j^{ag}}\phi_i(Y_i^{j,x}(K))\subset E^{j,x}(K)$ is a union of finite index group cosets, and Lemma \ref{Lem2} follows from (\ref{tuttomenofinito}) and Lemma \ref{lemmagruppi}.
	\end{proof}
	
	We may now conclude the proof of Proposition \ref{fibrazioni}.

	Let
	\[
	\widetilde{T_1} \defeq 	\left\{t \in \P_1(K):\#(\pi_2^{-1}(t)(K)\cap\pi_1^{-1}(T_1))=\infty \right\}.
	\]
	Since $\Eps$ is Hilbert generic, we have that $\widetilde{T_1} \subset \P_1(K)$ is thin. 
	Let now $x \in N_2 \setminus (T_2 \cup \widetilde{T_1})$, and
	\[
	X \defeq \{Q \in E^{2,x}(K) : \pi_1(Q) \in T_1 \text{ or } \#E^{1,\pi_1(Q)}(K)<\infty\}.
	\]
	
	Since $x\notin \widetilde{T_1}$ and $x \notin Z_2$, we have that $\# X<\infty$, and, since $x \in N_2$, $\#E^{2,x}(K)=\infty$. Now, for any $Q \in E^{2,x}(K) \setminus X$, we have that $\pi_1(Q) \notin T_1 \text{ and } \#E^{1,\pi_1(Q)}=\infty$, and, therefore, by Lemma \ref{Lem2}, $Q \in \cup_{i \in I_1^{ag}} \phi_i(Y_i(K))$. It follows that $E^{2,x}(K)\setminus X \subset \cup_{i \in I_1^{ag}} \phi_i(Y_i(K))$.
 	
 	Since $\pi_1\times \pi_2$ is finite, no irreducible component of the fibers of $\pi_1$ is contained in a fiber of $\pi_2$. Therefore almost $1$-good covers are $2$-good covers, and, hence, by Lemma \ref{Lem1}, $E^{2,x}(K) \cap \cup_{i \in I_1^{ag}} \phi_i(Y_i(K))$ is finite. Since $E^{2,x}(K) \setminus X$ is infinite, we obtain a contradiction.
\end{proof}

\begin{lemma}\label{coverellittiche}
	Let $K$ be a number field, and let $\pi_i:E_{i}\longrightarrow \P_1/K, \ i\in I$, with $\abs{I}=\infty$,  be elliptic covers (i.e. finite morphisms where $E_i$ is a smooth geometrically connected projective genus $1$ curve), such that: 
	\begin{itemize}
		\item[(a)] For each choice $p_1, \dots, p_n$ of points in $\P_1$, we have that for all but finitely many $i\in I$, $E_i \longrightarrow \P_1/K$ does not ramify over the $p_i$'s;
		\item[(b)] For each $i \in I$ there is a subset $S_i \subset E_i(K)$ of infinite cardinality.
	\end{itemize}
	Then $\bigcup_i \pi_i(S_i)$ is not thin in $\P_1(K)$. 
\end{lemma}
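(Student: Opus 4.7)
The plan is to argue by contradiction. Suppose $\bigcup_i \pi_i(S_i) \subset \P_1(K)$ were thin; then it would lie in $D(K) \cup \bigcup_{j=1}^n \phi_j(Y_j(K))$ with $D \subsetneq \P_1$ proper closed (hence $D(K)$ finite) and each $\phi_j : Y_j \to \P_1$ finite of degree $>1$, with $Y_j$ geometrically irreducible. After replacing each $Y_j$ by its normalization and absorbing the exceptional locus into $D$, I may assume the $Y_j$ are smooth projective curves. Let $B_j \subset \P_1$ be the (finite) branch locus of $\phi_j$ and set $B := \bigcup_{j=1}^n B_j$. By hypothesis $(a)$, for all but finitely many $i \in I$ the cover $\pi_i$ is unramified over $B$; I fix one such $i$.

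For this fixed $i$ and each $j$, form the fibered product $W_{i,j} := E_i \times_{\P_1} Y_j$, with first projection $\psi_{i,j} : W_{i,j} \to E_i$. The strategy is to show that every geometric irreducible component $V$ of the normalization of $W_{i,j}$ satisfies $g(V) \geq 2$. Granting this, Faltings' theorem implies that $W_{i,j}(K)$ is finite for each $j$: geometric components that are not Galois-stable contribute $K$-points only through finite intersections with their Galois conjugates, while Galois-stable ones are $K$-curves of genus $\geq 2$. Each $p \in S_i$ with $\pi_i(p) \notin D(K)$ satisfies $\pi_i(p) = \phi_j(q)$ for some $j$ and some $q \in Y_j(K)$, yielding a $K$-point $(p,q) \in W_{i,j}(K)$ above $p$; therefore $S_i \setminus \pi_i^{-1}(D(K))$ is contained in the finite set $\bigcup_j \psi_{i,j}(W_{i,j}(K))$, contradicting the infiniteness of $S_i$ guaranteed by $(b)$.

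The heart of the matter, and the main obstacle, is the genus bound. Let $V$ be a geometric component of the normalization of $W_{i,j}$, with projections $\psi : V \to E_i$ and $\eta : V \to Y_j$ satisfying $\pi_i \circ \psi = \phi_j \circ \eta$. Since there is no nonconstant map $\P_1 \to E_i$, the component $V$ cannot be $\P_1$. If $g(V) = 1$, then Riemann--Hurwitz applied to $\psi$ (whose target has genus $1$) forces $\psi$ to be étale, including the degenerate subcase $\deg \psi = 1$ where $\psi$ is an isomorphism. Comparing the branch loci of the common composite $V \to \P_1$ via the two factorizations, the computation through $E_i$ yields exactly $B_{\pi_i}$ (étaleness of $\psi$ eliminates any contribution from $\psi$), whereas the computation through $Y_j$ contains $B_j$ (since $\eta$ is surjective and $B_{\phi_j} = B_j$). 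Thus $B_j \subset B_{\pi_i}$, contradicting $B_{\pi_i} \cap B = \emptyset$; note that $B_j$ is nonempty because any cover of $\P_1$ of degree $>1$ has nontrivial ramification by Riemann--Hurwitz. Hence $g(V) \geq 2$ in all cases, completing the proof.
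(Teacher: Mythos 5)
Your proof is correct and follows essentially the same strategy as the paper: pass to the fibered product $E_i \times_{\P_1} Y_j$, use disjointness of the branch loci together with Riemann--Hurwitz to force genus $\geq 2$, and conclude by Faltings. The one place you diverge is that the paper invokes simple connectedness of $\P_1$ to show directly that the fibered product is a \emph{smooth irreducible} curve, then applies Riemann--Hurwitz once; you instead normalize, allow a priori for several geometric components, and rule out genus $0$ (no nonconstant map to $E_i$) and genus $1$ (étaleness of the projection to $E_i$ would force $B_j \subset B_{\pi_i}$, impossible since $B_j \neq \emptyset$) for each component separately, together with the Galois-conjugate argument to handle components not defined over $K$. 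This is a slightly longer but perfectly valid substitute that avoids having to justify irreducibility of the fibered product.
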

\begin{proof}
	Suppose, by contradiction, that $\bigcup_i \pi_i(S_i)$ is thin. 
	Then there exist a finite set $A \subset \P_1(K)$, a $m \in \N$, and, for each $j=1,\dots,m$,  a smooth complete curve $C_j$ defined over $K$, and finite morphisms $\phi_j: C_j \rightarrow \P_1$, of degree $>1$, such that:
	\[
	\bigcup_i \pi_i(S_i)\subset \bigcup_{j=1,\dots,m}\phi_i(C_i(K))\cup A.
	\]

	
	Let $\{p_1, \dots, p_n\} \in \P_1(\bar{K})$ be the union of all diramation points of the $\phi_j$'s. By hypothesis $(a)$, there exists $i_0 \in I$ such that $\pi_{i_0}: E_{i_0} \longrightarrow \P_1$ is an elliptic cover with ramification disjoint from the $p_k$'s. Let $D_j\defeq E_{i_0}\times_{\P_1}C_j$, and $\psi_j\defeq D_j \longrightarrow E_{i_0}$ be the projection on the first factor. Since the diramations of $\phi_j$ and $\pi_{i_0}$ are disjoint and $\P_1$ is simply connected, $D_j$ is a smooth irreducible curve. Moreover, since $\phi_j$ ramifies over points where $\pi_{i_0}$ does not, the morphism $\psi_j$ will be ramified over the ramification points of $\phi_j$. Therefore, by applying the Riemann-Hurwitz formula on $\psi_j$, we deduce that the genus of $D_j$ is $\geq 2$. Hence, by Faltings' theorem, there are only finitely many $K$-rational points on $D_j$. It follows that $\pi_{i_0}^{-1}(\phi_j(C_j(K)))=\psi_j(D_j(K))$ is finite for each $j=1,\dots,m$, which implies that there are infinitely many points $p$ in $S_{i_0}$ such that $\pi_{i_0}(p)$ does not lie in any of the $\phi_j(C_j(K))$, which is a contradiction.
\end{proof}

We are now ready to prove Theorem \ref{Thm:fibrations}.

\begin{proof}[Proof of Theorem \ref{Thm:fibrations}]
	
	Theorem \ref{Thm:fibrations} follows directly from Proposition \ref{fibrazioni} once we have shown that the hypothesis of the proposition are satisfied, i.e. that the double elliptic surface $\Eps \defeq (E,\pi_1,\pi_2)$ is Hilbert generic, and that there exist two non-thin subsets $N_1,N_2 \subset \P_1(K)$, such that, for each $x \in N_j, j=1,2, \#\pi_j^{-1}(x)(K)=\infty$.
	
	\smallskip
	
	We first prove that the double elliptic surface $\Eps/K$ is Hilbert generic. 
	
	For any $T \subset \P_1(K)$ let
	\[
	A_T\defeq\left\{t \in \P_1(K):\#(\pi_1^{-1}(t)\cap\pi_2^{-1}(T))=\infty \right\}.
	\]
	Since there is complete symmetry between $\pi_1$ and $\pi_2$, to prove that $\Eps/K$ is Hilbert generic it will suffice to prove that $A_T$ is finite (hence thin) for any thin subset $T \subset \P_1(K)$. 
	
	Suppose by contradiction that, for some thin subset $T \subset \P_1(K)$, $A_T$ is infinite. Hypothesis $(b)$ grants that hypothesis $(a)$ of Lemma \ref{coverellittiche} holds for the morphisms $\phi_t\defeq \restricts{\pi_2}{\pi_1^{-1}(t)}$, indexed by $t \in A_T$. Hence, by applying Lemma \ref{coverellittiche} to the morphisms $\phi_t\defeq \restricts{\pi_2}{\pi_1^{-1}(t)}$, indexed by $t \in A_T$, and to the sets $S_t \defeq \pi_1^{-1}(t)\cap\pi_2^{-1}(T)$, we deduce that $T \supset\cup_{t \in A_T} \pi_2(\pi_1^{-1}(t)\cap\pi_2^{-1}(T))$ is not thin, which is a contradiction. Therefore, $\Eps/K$ is Hilbert generic.

	
	Let us now prove that there exists a non-thin subset $N_1 \subset \P_1(K)$ such that, for each $x \in N_1$,  $\#\pi_1^{-1}(x)(K)=\infty$. The argument proving the existence of $N_2 \subset \P_1(K)$ is symmetric upon exchanging $\pi_1$ and $\pi_2$. 
	
	Let $Z_j\subset \P_1(K), \ j=1,2$, be defined as in Corollary \ref{hypothesisa}. Lemma \ref{positivegenericrank} and the fact that $K$-rational points are Zariski-dense in $E$ (hypothesis $(a)$) imply that there exists a $t \in \P_1(K)\setminus Z_1$ such that $\#\pi_1^{-1}(t)(K)=\infty$. By Corollary \ref{hypothesisa}, there exists an infinite subset $X \subset \pi_1^{-1}(t)(K)$ such that for each $P \in X$
	there exists an infinite subset $X_P \subset \pi_2^{-1}(\pi_2(P))(K)$ such that for each $Q \in X_P$, $\#\pi_1^{-1}(Q)(K)=\infty$. 
	
%
	
	By Lemma \ref{coverellittiche}, applied to the morphisms $(\restricts{\pi_1}{\pi_2^{-1}(\pi_2(P))})_{P \in X}$ and to the sets $(X_P)_{P \in X}$, we know that $\cup_{P \in X} \pi_1(X_P) \subset \P_1(K)$ is non-thin. Then the set $N_1\defeq \cup_{P \in X} \pi_1(X_P) \subset \P_1(K)$ satisfies the sought condition. This concludes the proof of the theorem.
\end{proof}
%

\section{K3 surfaces with the Hilbert Property}

In this section we apply Theorem \ref{teoremino} and Theorem \ref{Thm:fibrations} to prove that some families of K3 surfaces have the Hilbert Property.

\subsection{Diagonal quartic surfaces}\label{diagonalquarticsurfaces}

\begin{definition}\label{Def:diagonal}
	For a field $k$ and $a,b,c,d \in k^*$, let $V_{a,b,c,d}$ be the surface in $\P_3/k$ defined by the equation $ax^4+by^4+cz^4+dw^4=0$. The surfaces $V_{a,b,c,d} \subset \P_3$ are called \textit{diagonal quartic surfaces}.
\end{definition}

Diagonal quartic surfaces have been widely studied in the literature. In particular, in \cite{articoloV}, Logan, McKinnon and van Luijk prove the following theorem concerning the Zariski-density of rational points in certain diagonal quartic surfaces.


\begin{theorem}[Logan, McKinnon, van Luijk]\label{articoloV}
	Let $V_{a,b,c,d}:ax^4+by^4+cz^4+dw^4=0$ be a diagonal quartic surface in $\P_3/\Q$, and suppose that $abcd \in (\Q^*)^2$. Suppose, moreover, that there exists a rational point $P=[x_0:y_0:z_0:w_0] \in V_{a,b,c,d}(\Q)$ such that $x_0y_0z_0w_0 \neq 0$ and $P$ does not lie on the $48$ lines contained in $V_{a,b,c,d}$.
	Then, the rational points $V_{a,b,c,d}(\Q)$ are Zariski-dense in $V$. 
\end{theorem}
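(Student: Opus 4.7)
My approach is to exploit the condition $abcd\in(\Q^*)^2$ to produce two distinct elliptic fibrations on $V:=V_{a,b,c,d}$ defined over $\Q$, and then to bootstrap the single rational point $P$ into Zariski density by alternating between the two fibrations---the standard two-fibration density argument for K3 surfaces.

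\textbf{Step 1: $\Q$-rational elliptic fibrations.} Consider the degree-$8$ map $\phi:V\to Q$, $[x:y:z:w]\mapsto[x^2:y^2:z^2:w^2]$, where $Q\subset\P_3$ is the quadric $aX^2+bY^2+cZ^2+dW^2=0$. Since the discriminant of $Q$ equals $abcd$, the hypothesis $abcd\in(\Q^*)^2$ implies that both rulings of $Q$ by lines are defined over $\Q$. Each ruling is a $1$-parameter family of lines on $Q$, and pulling back through $\phi$ (and suitably resolving the base locus) yields two elliptic fibrations $\pi_1,\pi_2:V\to\P_1$ defined over $\Q$, whose product $\pi_1\times\pi_2$ is finite.

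\textbf{Step 2: $P$ is non-torsion on its fiber.} Since $x_0y_0z_0w_0\neq 0$, the map $\phi$ is étale at $P$, so $P$ lies on a smooth fiber $F_1:=\pi_1^{-1}(\pi_1(P))$, a smooth genus-one curve over $\Q$ containing the rational point $P$---hence an elliptic curve. The key step is to show that $P$, viewed as a section of $\pi_1$, is non-torsion in the Mordell--Weil group of the generic fiber of $\pi_1$ over $\Q(\P_1)$. The torsion sections of the natural elliptic fibrations on a diagonal quartic are well understood and can be shown to sit inside the union of the $48$ distinguished lines on $V$, so the hypothesis that $P$ avoids all $48$ lines is precisely what should rule out the torsion case. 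Once non-torsion is established, Silverman's specialization theorem implies that $P$ specializes to a non-torsion point on all but finitely many fibers, and in particular on $F_1$, so $F_1(\Q)$ is infinite.

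\textbf{Step 3: Spreading out via $\pi_2$.} Let $C_1\subset V$ be the Zariski closure of $F_1(\Q)$, a curve carrying infinitely many rational points. Because $\pi_1\times\pi_2$ is finite, $\pi_2|_{C_1}$ is non-constant, so $\pi_2(C_1(\Q))\subset\P_1(\Q)$ is infinite. For all but finitely many $Q\in C_1(\Q)$, the fiber $\pi_2^{-1}(\pi_2(Q))$ is smooth and $Q$ is again non-torsion on it (exploiting that $C_1$ is not contained in the union of the $48$ lines, hence a generic $Q$ also avoids them), so each such fiber contributes infinitely many $\Q$-rational points of $V$. The union of all these fibers of $\pi_2$ is a $2$-dimensional constructible subset of $V$, hence Zariski dense, proving the theorem. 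The main obstacle, where the delicate geometric input is needed, is Step 2: one must rigorously identify the torsion locus of the generic fiber with the $48$ lines, which seems to require a direct description of the Mordell--Weil group of the generic fiber of $\pi_1$ over $\Q(\P_1)$.
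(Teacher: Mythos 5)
This theorem is not proved in the paper under review; it is quoted verbatim from Logan, McKinnon and van Luijk \cite{articoloV} and used as a black box, so there is no internal proof to compare against. Your sketch does capture the right overall strategy (two $\Q$-rational elliptic fibrations on the diagonal quartic arising from the two rulings of the auxiliary quadric, plus an alternating ``spread-out'' argument), but there are real gaps in the execution.

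The central problem is in Step 2. You propose to ``view $P$ as a section of $\pi_1$'' and then invoke Silverman's specialization theorem. But $P$ is a single $\Q$-point of $V$ lying on one fiber $F_1 := \pi_1^{-1}(\pi_1(P))$; it does not give a section $\P_1 \to V$, and it does not define an element of the Mordell--Weil group of the generic fiber over $\Q(\lambda)$. Silverman's specialization theorem transfers information from a non-torsion section to almost all fibers; here we have no section, and in any case the only fiber we need to control is $F_1$ itself, so specialization contributes nothing. What is actually needed is a direct statement about the elliptic curve $F_1$ over $\Q$: after choosing a $\Q$-rational origin on $F_1$ (one has to justify that $F_1(\Q)\neq\emptyset$ independently of $P$, e.g.\ via the intersection of $F_1$ with the 48 lines or with $\{xyzw=0\}$), one must show that the resulting torsion subgroup of $F_1(\Q)$ is contained in $F_1 \cap (L_{a,b,c,d}\cup\{xyzw=0\})$. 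This is exactly the ``delicate geometric input'' you flag, and it is not a side issue: it is the entire content of the theorem, carried out in Logan--McKinnon--van Luijk by explicit Weierstrass models for the generic fiber, identification of the torsion sections, and a uniform bound on torsion (Mazur/Merel) to control the fiberwise torsion. Without it, the hypothesis about the 48 lines is never actually used.

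Step 3 inherits the same gap. You assert that a generic $Q \in C_1(\Q)$ is again non-torsion on its $\pi_2$-fiber ``exploiting that $C_1$ is not contained in the union of the 48 lines,'' but this is the same unproved implication (avoid the lines $\Rightarrow$ non-torsion) applied a second time, and the choice of origin on the $\pi_2$-fibers again needs to be made and justified over $\Q$. In short: the skeleton is right, but the load-bearing step---relating the $48$ lines to torsion on the fibers, fiberwise and not via sections---is missing, and the Silverman-specialization step as written is a category error that cannot be repaired without replacing it by the fiberwise torsion analysis.
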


The 48 lines lying on the surface $V_{a,b,c,d}$ may be described explicitly through the following equations:
\begin{align*}
&\begin{cases}
\sqrt[4]{a}x=i^{j/8}\sqrt[4]{b}y \ j=1,3,5,7\\
\sqrt[4]{c}z=i^{k/8}\sqrt[4]{d}w \ k=1,3,5,7
\end{cases},
\end{align*}
and their permutations (permuting the variables $x,y,z,w$ and the coefficients $a,b,c,d$ by the same element in $\mathcal{S}_4$).

We denote by $\Omega_{a,b,c,d}$ the ``bad" subvariety of Theorem \ref{articoloV}, i.e.
\[
\Omega_{a,b,c,d}\defeq \{[x:y:z:w]\in \P_3:xyzw=0 \}\cup L_{a,b,c,d},
\]
where $L_{a,b,c,d}$ denotes the union of the 48 lines in $V_{a,b,c,d}$.

As a corollary of Theorem \ref{articoloV} (see Corollary \ref{Pr:diagonalquartics} below), using Theorem \ref{Thm:fibrations}, we will deduce that these surfaces have the Hilbert Property.

In \cite{articoloV}, the authors use two elliptic fibrations $\pi_1,\pi_2:V_{a,b,c,d}\rightarrow \P_1$ of the surface $V_{a,b,c,d}$ to prove Theorem \ref{articoloV}. Since we are going need these fibrations to prove Corollary \ref{Pr:diagonalquartics}, we briefly recall their construction, while we refer the reader to \cite[Sec. 2]{articoloV} for more details.

To shorten the notation, let $V\defeq V_{a,b,c,d}$, $P$ be a point in $V(\Q)$, and $Q \subset \P_3/\Q$ be the quadric:
\[
ax^2+by^2+cz^2+dw^2=0.
\] 
Let $S:V\rightarrow Q$ be defined as $S([x:y:z:w])=[x^2:y^2:z^2:w^2]$, and $P'=S(P)\in Q(\Q)$. Now $Q \subset \P_3$ is a quadric with a prescribed rational point $P' \in Q(\Q)$, and with $abcd \in (\Q^*)^2$. Hence, the two line pencils of $Q$ are defined over $\Q$ and they define an isomorphism $(\psi_1,\psi_2):Q \xrightarrow{\cong} \P_1\times \P_1$, which is unique up to a permutation of the two pencils and linear automorphisms of the $\P_1$ parameterizing them. We fix once and for all one such isomorphism, which we denote by $(\psi_1,\psi_2)$.
 Define \[(\pi_1,\pi_2)\defeq (\psi_1,\psi_2)\circ S:V \rightarrow \P_1 \times \P_1.\]
The two morphisms $\pi_1,\pi_2:V_{a,b,c,d} \rightarrow \P_1$ are then elliptic fibrations, and the morphism $(\pi_1,\pi_2):V_{a,b,c,d} \rightarrow \P_1 \times \P_1$ is finite.
%

\begin{corollary}\label{Pr:diagonalquartics}
	Let $V_{a,b,c,d}:ax^4+by^4+cz^4+dw^4=0$ be a diagonal quartic surface in $\P_3/\Q$, and suppose that $abcd \in (\Q^*)^2$. Suppose moreover that there exists a rational point $P=[x_0:y_0:z_0:w_0] \in V_{a,b,c,d}(\Q)\setminus \Omega_{a,b,c,d}$.
Then, the surface $V_{a,b,c,d}/\Q$ has the Hilbert Property. 
\end{corollary}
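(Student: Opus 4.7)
I would approach this by applying Theorem \ref{Thm:fibrations} to the double elliptic surface $\Eps\defeq(V_{a,b,c,d},\pi_1,\pi_2)$ constructed above. The surface $V\defeq V_{a,b,c,d}$ is a smooth quartic in $\P_3$, hence a K3 surface, and in particular algebraically simply connected; the finiteness of $\pi_1\times\pi_2$ was built into the construction; and hypothesis $(a)$ of Theorem \ref{Thm:fibrations} follows from the assumption $P\in V(\Q)\setminus\Omega_{a,b,c,d}$ together with Theorem \ref{articoloV}. The content of the proof is therefore the verification of hypothesis $(b)$.

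To verify $(b)$ I would analyze the diramation divisor $R$ of $\pi_1\times\pi_2\colon V\to\P_1\times\P_1$ using the factorization $(\pi_1,\pi_2)=(\psi_1,\psi_2)\circ S$. Since $(\psi_1,\psi_2)\colon Q\xrightarrow{\sim}\P_1\times\P_1$ is an isomorphism (in particular étale), $R$ is exactly the image under $(\psi_1,\psi_2)$ of the diramation locus of $S$. The map $S$ is the quotient of $V$ by the evident $(\mathbb{Z}/2\mathbb{Z})^3$-action given by sign changes on the projective coordinates, and is ramified precisely along the four hyperplane sections $V\cap\{x_i=0\}$; its diramation on $Q$ is consequently the union of the four conics $Q\cap\{y_i=0\}$, each of which is smooth (hence irreducible) thanks to $abcd\neq 0$. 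Under the inverse Segre identification $(\psi_1,\psi_2)$, hyperplane sections of $Q$ correspond to divisors of bidegree $(1,1)$ on $\P_1\times\P_1$, so $R$ decomposes as a union of four irreducible $(1,1)$-curves $C_1,\dots,C_4\subset\P_1\times\P_1$.

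Each irreducible $(1,1)$-curve in $\P_1\times\P_1$ is the graph of an isomorphism $\P_1\to\P_1$, and so meets every fiber $\{\lambda_0\}\times\P_1$ of the first projection in a single point whose second coordinate depends non-constantly on $\lambda_0$. It follows that all diramation points of $\pi_2|_{\pi_1^{-1}(\eta_1)}$ are non-constant in $\lambda$, and by the symmetry of the construction the analogous statement holds on interchanging $\pi_1$ and $\pi_2$. Hypothesis $(b)$ of Theorem \ref{Thm:fibrations} is thus satisfied and the corollary follows.

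The step requiring the most care is the irreducibility of the four curves $C_\ell$: a reducible $(1,1)$-curve on $\P_1\times\P_1$ must split as the union of a vertical and a horizontal line, and such a horizontal component would contribute a diramation point constant in $\lambda$, invalidating $(b)$. This irreducibility is guaranteed by the smoothness of the conics $Q\cap\{y_i=0\}$, itself a consequence of the non-degeneracy of the ternary quadratic forms obtained by zeroing a single coordinate in the equation of $Q$ (using $abcd\neq 0$).
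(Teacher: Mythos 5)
Your proposal is correct and follows the same route as the paper: apply Theorem \ref{Thm:fibrations} to $(V_{a,b,c,d},\pi_1,\pi_2)$, obtain hypothesis $(a)$ from Theorem \ref{articoloV}, and verify hypothesis $(b)$ by tracing the ramification of $\pi_2$ through the factorization $(\pi_1,\pi_2)=(\psi_1,\psi_2)\circ S$ and observing that $S$ ramifies along $\{xyzw=0\}$. The only difference is one of detail: the paper concludes with the terser observation that $\{xyzw=0\}\subset V$ meets the fibers of $\pi_2$ properly, whereas you explicitly identify the diramation divisor in $\P_1\times\P_1$ as four smooth conics, i.e.\ irreducible $(1,1)$-curves and hence graphs of isomorphisms $\P_1\to\P_1$; your version spells out precisely why no constant diramation point can occur.
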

\begin{proof}
	Let $\mathcal{V}\defeq (V_{a,b,c,d},\pi_1,\pi_2)$, where we are keeping the notation of the above discussion. 
	
	We will show that $\mathcal{V}$ satisfies the two hypothesis of Theorem \ref{Thm:fibrations}. 
	
	The Zariski density of rational points of $V_{a,b,c,d}$ follows directly from Theorem \ref{articoloV}, so hypothesis $(a)$ is satisfied. 
	
	In order to verify hypothesis $(b)$, let $Q^{\lambda}/\Spec \Q(\lambda)$ and  $V^{\lambda}/\Spec \Q(\lambda)$  be the generic fibers, respectively, of $\psi_1$ and $\pi_1$.  From the definition of the map $S$, it follows directly that the morphism $S^{\lambda}\defeq \restricts{S}{V^{\lambda}}:V^{\lambda}\rightarrow Q^{\lambda}$ ramifies over the locus $xyzw=0$. It is immediate that $\restricts{\psi_2}{Q^{\lambda}}$ is an isomorphism, hence it is unramified. Therefore the ramification of $\restricts{\pi_2}{V^{\lambda}}=\restricts{\psi_2}{Q^{\lambda}}\circ S^{\lambda}$ is contained in the locus $\{xyzw=0\}\subset V^{\lambda}$. However, the locus $\{xyzw=0\}\subset V$ intersects properly the fibers of $\pi_2$. Hence the diramation of $\restricts{\pi_2}{V^{\lambda}}=\restricts{\psi_2}{Q^{\lambda}}\circ S^{\lambda}$ is non-constant in $\lambda$. 
	
	After reiterating the above argument upon exchanging $\pi_1$ and $\pi_2$, we have proven that hypothesis $(b)$ of Theorem \ref{Thm:fibrations} holds for $\mathcal{V}$, thus concluding the proof of the corollary.
\end{proof}

\begin{remark}\label{Fermat}
	The case $(a,b,c,d)=(1,1,-1,-1)$ is that of the Fermat surface, whose Hilbert Property has been proven in \cite{articoloHP}.
\end{remark}

\subsection{An application of Theorem \ref{teoremino}}

Let $\tilde{Y}$ be the smooth minimal projective surface over $\Q$, which is birational to the following surface of $\A^4$: 


\begin{equation}
	Y^0\defeq \begin{cases}
		x^4+y^2=z^2+w^4\\
		yz=1
	\end{cases}.
\end{equation}


We will show (see Proposition \ref{Prop:quotient}) that the surface $Y^0$ (and, hence, its smooth model $\tilde{Y}$, which is a K3 surface) has the Hilbert Property. Rather than the result itself we feel that the interest of this example resides in the fact that it is a non-trivial application of Theorem \ref{teoremino}. In fact, the surface $Y^0$ is (isomorphic to) an open Zariski subvariety of the quotient\footnote{$Y$ is isomorphic to the projective closure of $Y^0$ in the weighted projective space $\P(12212)/\Q$, with projective variables $x,y,z,w,t$ weighted in this order. The surface $Y$ is singular in the 4 points $[1:0:0:i^k:0], \ k=0,\dots,3$. Blowing up these 4 points yields a smooth K3 surface $\tilde{Y}$.} $Y$ of the Fermat quartic $F$ by the automorphism $\sigma([x:y:z:w])=[x:-y:-z:w]$.
 \smallskip

For any $d \in \Q^*$, let $F_d \defeq V_{1,d^2,-d^2,-1}$, and $\Omega_d\defeq \Omega_{1,d^2,-d^2,-1}$.

\begin{lemma}\label{Cond:final}
	 For any $l \in \N$, and $a_1,\dots,a_l \in \Q^*$, there exists $d \in \Q^*$ such that $F_d(\Q) \nsubseteq \Omega_d$ and, for all $j =1,\dots,l \ , \ da_j \notin (\Q^*)^2\subset \Q^*$.
\end{lemma}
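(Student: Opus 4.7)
The plan is to reduce the lemma to showing that the set
\[
\Sigma \,:=\, \{[d] \in \Q^*/(\Q^*)^2 \,:\, F_d(\Q) \nsubseteq \Omega_d\}
\]
is infinite in $\Q^*/(\Q^*)^2$. Granted this, given $a_1,\dots,a_l \in \Q^*$ one picks $[d] \in \Sigma \setminus \{[a_j]\}_{j=1}^l$: since $\Q^*/(\Q^*)^2$ is $2$-torsion, the condition $d a_j \notin (\Q^*)^2$ is precisely the assertion $[d] \neq [a_j]$, so such a $d$ satisfies both requirements simultaneously.

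The easy step is to show $[1] \in \Sigma$. Euler's classical identity $133^4 + 134^4 = 59^4 + 158^4$ provides a rational point $P_0 = [133:134:59:158]$ on $F_1(\Q)$; since $|133|,|134|,|59|,|158|$ are pairwise distinct and nonzero, $P_0$ avoids each of the eight rational lines on $F_1$ (the four $x = \pm w$, $y = \pm z$, together with the four $x = \pm z$, $y = \pm w$ present because $d = 1$ is a square), so $P_0 \notin \Omega_1$. The substitution $(X,Y,Z,W) \mapsto (X, Y/s, Z/s, W)$ sends $F_1$-points to $F_{s^2}$-points, and the same four inequalities show that the image of $P_0$ lies in $F_{s^2}(\Q) \setminus \Omega_{s^2}$ for every $s \in \Q^*$, so every element of $(\Q^*)^2$ is realized.

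The crux of the proof is producing infinitely many classes of $\Sigma$ distinct from $[1]$. The plan is to exploit the elliptic fibrations $\pi_1, \pi_2 \colon F_d \to \P^1$ recalled in Subsection~\ref{diagonalquarticsurfaces}: globalising over the $d$-parameter yields a fibration $\Pi_1 \colon \mathcal{F} \to \P^1 \times \A^1_d$, where $\mathcal{F} \subset \P^3 \times \A^1_d$ is the threefold cut out by $x^4 + d^2 y^4 - d^2 z^4 - w^4 = 0$. Fixing a generic rational $\lambda_0$ yields a $\Q$-family $\mathcal{E}_{\lambda_0} \to \A^1_d$ of elliptic curves, and the goal is to exhibit a non-torsion $\Q(d)$-section of $\mathcal{E}_{\lambda_0}$ avoiding the universal bad locus. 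Once such a section is available, Silverman's specialization theorem ensures that for all but thinly many $d \in \Q^*$ the specialized fibre has positive Mordell--Weil rank, furnishing rational points of $F_d \setminus \Omega_d$; a further standard argument shows that the $d$'s so obtained span infinitely many square classes, since otherwise the section would factor through a closed proper subvariety of $\A^1_d$, contradicting its generic construction.

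The main obstacle is producing the required non-torsion section of $\mathcal{E}_{\lambda_0} \to \A^1_d$, which is where the specific geometry of the diagonal-quartic family $\mathcal{F}$ has to be used decisively --- either by tracking the image of Euler's point $P_0$ along an auxiliary cover of $\A^1_d$, or via a $2$-descent argument on the family of elliptic curves. The remaining verifications (non-degeneracy of the section at the desired specializations and avoidance of $\Omega_d$) are routine after the geometric heart of the construction is in place.
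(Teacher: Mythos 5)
The reduction in your first paragraph is clean and correct: the condition $da_j \notin (\Q^*)^2$ is precisely $[d]\neq[a_j]$ in $\Q^*/(\Q^*)^2$, the isomorphism $[x:y:z:w]\mapsto[x:y/s:z/s:w]$ between $F_d$ and $F_{ds^2}$ does preserve $\Omega_d$ (it is linear, fixes $\{xyzw=0\}$, and carries lines to lines), so membership of $F_d(\Q)\nsubseteq\Omega_d$ genuinely descends to $\Q^*/(\Q^*)^2$, and therefore it suffices to show $\Sigma$ is infinite. Likewise the verification that $[1]\in\Sigma$ via Euler's identity is a reasonable sanity check (though, strictly, you should rule out $[133:134:59:158]$ lying on all $48$ lines, not only the $8$ rational ones; a rational point can sit on a non-rational line).

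However, the core of the lemma --- producing infinitely many distinct square classes in $\Sigma$ --- is where your proposal stops being a proof and becomes a research plan. You explicitly flag the production of a non-torsion $\Q(d)$-section of the family $\mathcal{E}_{\lambda_0}\to\A^1_d$ as ``the main obstacle,'' and then offer only two candidate strategies (tracking an auxiliary cover, or a family $2$-descent) without carrying either out. This is exactly the content of the lemma: asserting the existence of such a section is roughly equivalent in difficulty to what is being proved. Moreover, the asserted passage from ``rational points for all but thinly many $d$'' to ``infinitely many square classes'' and the ``routine'' avoidance of $\Omega_d$ both require justification that is not supplied; the first is salvageable (a non-thin set of $\Q$ cannot lie in finitely many square classes, since each class is the image of a degree-$2$ map $\A^1\to\A^1$), but nothing has been established yet to feed into it.

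For comparison, the paper avoids the globalized-fibration machinery entirely and works with a single explicit curve: fixing odd coprime $m,n>1$ and $\lambda=n^4-m^4$, the locus of $(d,t)$ for which $[t:m:n:1]\in F_d(\Q)$ is exactly $C_\lambda:\lambda d^2=t^4-1$, a quadratic twist of $d^2=t^4-1$, which in Weierstrass form is $y^2=x^3+4x$. The explicit rational point $(1/m^2,\,n/m)$ is shown non-torsion by Lutz--Nagell, the odd multiples $[2k+1]P_0$ produce infinitely many $(d_k,t_k)$ and hence points $P_k=[t_k:m:n:1]\in F_{d_k}(\Q)\setminus\Omega_{d_k}$, and finally the constraint $d_ka_j\in(\Q^*)^2$ is shown to cut out a genus-$3$ quartic $a_j^{-2}\lambda s^4=t^4-1$, so Faltings' theorem bounds the number of exceptional $k$ for each $j$. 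This is precisely the concrete realization of the section your plan calls for, together with a sharper finiteness argument (Faltings on an auxiliary genus-$3$ curve) in place of your thin-set heuristic. You should supply an explicit construction along these lines; as written, the proposal has a genuine gap at its central step.
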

\begin{proof}
	In order to prove Lemma \ref{Cond:final}, we first construct rational numbers $d_k \in \Q^*, \ k \in \N$, and rational points $P_k \in F_{d_k}(\Q)$. The lemma will then follow after we prove that, for any $l \in \N$, and $a_1,\dots,a_l \in \Q^*$, there exists $k \in \N$ such that $F_{d_k}(\Q) \nsubseteq \Omega_{d_k}$ and, for all $j =1,\dots,l \ , \ d_ka_j \notin (\Q^*)^2$. 
	
	We now describe in detail this construction, which concludes with the definition of the points $P_k$ in (\ref{points}).

	
	
	We choose a pair of odd coprime integers $n,m>1$, and let $\lambda=n^4-m^4$. Let $C_{\lambda}$ be the normalization of the projective closure of the following curve in $\A_2$ (with coordinates $d$ and $t$):
	\begin{equation}\label{twistata}
		C_{\lambda}^0:\lambda d^2=t^4-1,
	\end{equation}
	The curve $C_{\lambda}$ is a genus $1$ curve defined over $\Q$. By construction, the point $P_0=(d_{P_0},t_{P_0})=\left(\frac{1}{m^2},\frac{n}{m}\right)$ lies on $C_{\lambda}$. The curve $C_{\lambda}$ is a quadratic twist of the (normalization of the projective closure of the) curve:
	\begin{equation}\label{Quartica}
	C:d^2=t^4-1,
	\end{equation}
	with respect to the involution $\sigma_C: d \mapsto -d, \ t \mapsto t$, and quadratic field $\Q(\sqrt{\lambda})$. 
	Let $\Xi_{\lambda}:C_{\lambda}\times \Spec \Q(\sqrt{\lambda}) \rightarrow C \times \Spec \Q(\sqrt{\lambda})$ be the morphism defined in Proposition \ref{twist}, for $X=C$ and $\alpha$ the unique isomorphism between $\Gal(\Q(\sqrt{\lambda})/\Q)$ and $<\sigma_C>\subset \Aut (C)$. It is easy to see that $\Xi_{\lambda}(P_0)=\left(\frac{\sqrt{\lambda}}{m^2},\frac{n}{m}\right)$.
	
	
	Using the substitution $x=2(t^2-d)$, $y=2tx$ one obtains the following Weierstrass model for $C$:
	\begin{equation}\label{Weierstrass}
	y^2=x^3+4x,
	\end{equation}
	which naturally endows $C$ with a structure of elliptic curve (and, therefore, a choice of origin $O \in C(\Q)$). In what follows, we are going to use tacitly such structure.
	
	It is a straightforward verification to check that $\sigma_C(P)=O_2-P$, where $O_2 \in C(\Q)$ is the $2$-torsion point with (Weierstrass) coordinates $(x(O_2),y(O_2))=(0,0)$. Hence:
	\[
	\Xi_{\lambda}(C_{\lambda}(\Q))=\{P \in C(\Q(\sqrt{\lambda})):P+\bar{P}=O_2\},
	\]
	and odd multiples of $P_0$ on $C$ will correspond to rational points on $C_{\lambda}$. Let us now prove that $P_0$ is not a torsion point in $C$.

	By explicit computation, that we omit here, it can be seen that $x([2]P_0)=2(t(P_0)^2-1/t(P_0)^2)$, where $t(P_0)$ denotes the $t$-coordinate of $\Xi_{\lambda}(P_0)$ in the model (\ref{Quartica}), and $x([2]P_0)$ denotes the $x$-coordinate of $[2]P_0$ in the model (\ref{Weierstrass}). Since $t(P_0)^2-1/t(P_0)^2=\frac{2(n^4-m^4)}{n^2m^2}\notin \Q$, it follows from the classical Theorem of Lutz and Nagell (see e.g. \cite[Cor. VIII.7.2]{silverman1}), that $[2]P_0$, and hence $P_0$, is not torsion in $C$.

	
	
	For each $k \in \N$, let now $(d_k,t_k)$ be the coordinates in the model (\ref{twistata}) of the point $\Xi_{\lambda}^{-1}([2k+1]P_0) \in C_{\lambda}(\Q)$. Such coordinates give us infinitely many rational solutions $(d_k,t_k)_{k \in \N} \in \Q^2$ of the polynomial equation
	\[
	\lambda d^2=t^4-1.
	\]	
	Moreover, for each $k\in \N$, we have that the point
	\begin{equation}\label{points}
	P_k\defeq [t_k:m:n:1]
	\end{equation}
	lies in $F_{d_k}(\Q)$. By a straightforward calculation, that we omit here, one can verify that, for all but finitely many $k \in \N$, $P_k \notin \Omega_{d_k}$. Hence, for all but finitely many $k \in \N$, $F_{d_k}$ has the Hilbert Property.
	
	

	
	Now we prove that for any $l\in \N$, and for any $a_1,\dots,a_l \in \Q^*$, there are only finitely many $k \in \N$ such that, for some $j$, $d_ka_j \in (\Q^*)^2$.
	
	We have that, if $d_ka_j=s^2$ for some $s \in \Q$, then:
	\begin{equation}\label{Fermat_twistata}
	a_j^{-2}\lambda s^4=t^4-1\  (\text{where } a_j \neq 0).
	\end{equation}
	Equation (\ref{Fermat_twistata}) defines an affine curve $D^0_j$ in the affine plane $\A_2$ with coordinates $s$ and $t$. Its projective closure $D_j\subset \P_2$ is a curve of (geometric) genus $3$ (in fact $D_j$ is a smooth curve of degree $4$ in $\P_2$, and has therefore genus $(4-1) \cdot (4-2)/2=3$). 
	
	Therefore, by Faltings' Theorem applied to the curves $D_j$, for each $j=1,\dots, l$, there are only finitely many rational solutions $(s,t)\in \Q^2$ of the Equation (\ref{Fermat_twistata}). Hence, for each $j=1,\dots,l$ there are only finitely many $k \in \N$ such that $a_jd_k$ is a square, which concludes the proof of Lemma \ref{Cond:final}.
\end{proof}

\begin{proposition}\label{Prop:quotient}
	Let $F:\{x^4+y^4=z^4+w^4\}\subset \P_3/\Q$ be the Fermat quartic, and let $\sigma$ be the automorphism $\sigma([x:y:z:w])=[x:-y:-z:w]$. Then, the quotient $Y=F/<\sigma>$ has the Hilbert Property over $\Q$.
\end{proposition}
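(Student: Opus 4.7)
The plan is to apply Theorem~\ref{teoremino} with $X=F$, $G=\langle\sigma\rangle\cong C_2$, and $k=\Q$. The action of $\sigma$ is generically free, since its fixed locus on $F$ cuts out only finitely many points. Because $G$ has order $2$, the Galois $G$-extensions of $\Q$ are exactly the quadratic extensions $\Q(\sqrt{d})/\Q$, each equipped with a canonical isomorphism $\alpha_d:\Gal(\Q(\sqrt{d})/\Q)\xrightarrow{\sim}\langle\sigma\rangle$. To compute the twist I would use Proposition~\ref{twist}: a $\Q$-point of $F_{\alpha_d}$ corresponds to $[x{:}y{:}z{:}w]\in F(\Q(\sqrt{d}))$ with Galois conjugate equal (projectively) to $\sigma([x{:}y{:}z{:}w])$, and after rescaling via Hilbert~90 we may assume $x,w\in\Q$ and $y=\sqrt{d}\,y'$, $z=\sqrt{d}\,z'$ with $y',z'\in\Q$. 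Substituting into $x^4+y^4-z^4-w^4=0$ produces $x^4+d^2y'^4-d^2z'^4-w^4=0$, so $F_{\alpha_d}\cong F_d=V_{1,d^2,-d^2,-1}$.

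The first hypothesis of Theorem~\ref{teoremino} then reduces to showing that $F_d$ has the Hilbert Property for a suitable infinite family of $d$. Since $abcd=d^4\in(\Q^*)^2$ automatically, Corollary~\ref{Pr:diagonalquartics} applies whenever $F_d(\Q)\not\subseteq\Omega_d$, and Lemma~\ref{Cond:final} supplies infinitely many such $d$ (in fact with additional square-class constraints that will be needed in the next step).

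For hypothesis~(ii), fix an arbitrary finite extension $E/\Q$. The kernel of $\Q^*/(\Q^*)^2\to E^*/(E^*)^2$ is a finite group, so we may pick representatives $1=a_0,a_1,\dots,a_l\in\Q^*$ for its classes. Invoking Lemma~\ref{Cond:final} with this list yields $d\in\Q^*$ satisfying $F_d(\Q)\not\subseteq\Omega_d$ and $da_j\notin(\Q^*)^2$ for every $j$; equivalently, $d/a_j\notin(\Q^*)^2$, so $d$ does not lie in any square class becoming trivial in $E$, i.e.\ $\sqrt{d}\notin E$. This is precisely the linear disjointness of $\Q(\sqrt{d})$ and $E$ over $\Q$. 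Taking $\{L_i=\Q(\sqrt{d})\}$ as the family provided by varying $E$, Theorem~\ref{teoremino} yields the Hilbert Property for $Y=F/\langle\sigma\rangle$.

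The main obstacle, already absorbed into Lemma~\ref{Cond:final}, is the simultaneous control of two a~priori competing constraints: producing rational points on $F_d$ in sufficiently general position (needed to invoke Corollary~\ref{Pr:diagonalquartics}), while forcing the quadratic field $\Q(\sqrt{d})$ to avoid any prescribed finite extension. The first is handled by pushing forward infinitely many rational multiples of a non-torsion point on the genus-one twist $C_\lambda$; the second is secured by Faltings' theorem applied to the genus-three curves $D_j$, which prevents the accidental collapse of the relevant square classes. Once Lemma~\ref{Cond:final} is in hand, the proof is a clean assembly of the twist computation, Corollary~\ref{Pr:diagonalquartics}, and Theorem~\ref{teoremino}.
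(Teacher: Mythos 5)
Your proof is correct and follows essentially the same route as the paper: reduce via Theorem~\ref{teoremino} (with $X=F$, $G=\langle\sigma\rangle$), identify the twists as $F_d=V_{1,d^2,-d^2,-1}$, invoke Corollary~\ref{Pr:diagonalquartics} for the Hilbert Property of the twists, and use Lemma~\ref{Cond:final} to simultaneously produce good twists and guarantee linear disjointness of $\Q(\sqrt{d})$ from a prescribed finite extension. The extra details you supply (the explicit Hilbert~90 rescaling showing $F_{\alpha_d}\cong F_d$, and the reformulation of linear disjointness via the finite kernel of $\Q^*/(\Q^*)^2\to E^*/(E^*)^2$) are accurate and make explicit what the paper states more tersely.
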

\begin{proof}
	For any $d \in \Q^*\setminus (\Q^*)^2$, we note that $F_d$, as defined in Lemma \ref{Cond:final}, is the twist of the Fermat quartic $F:x^4+y^4=z^4+w^4$ by the automorphism $\sigma([x:y:z:w])=[x:-y:-z:w]$, with respect to the quadratic field extension $\Q(\sqrt{d})/\Q$. Hence, by Theorem \ref{teoremino} applied to $X=F$ and $G=\{1,\sigma\}$, in order to prove the proposition, it is sufficient to show that, for each number field $K$, there exists a $d \in \Q^* \setminus (\Q^*)^2$ such that $\Q(\sqrt{d})\nsubseteq K$ and $F_d/\Q$ has the Hilbert Property.
	
%
	
	
	But by Proposition \ref{Pr:diagonalquartics} we know that, for $d \in \Q^*$, $F_d$ has the Hilbert Property as soon as $F_d(\Q)\nsubseteq \Omega_d$.
	
	The statement follows now from Lemma \ref{Cond:final} applied to any rational numbers $a_1,\dots,a_l \in \Q^*$ such that
	\[
	\{\Q(a_1),\dots,\Q(a_l)\}=\{L \subset K:[L:\Q]=1,2 \}.
	\]

\end{proof}

\begin{remark}
	We focused here on proving the Hilbert Property for the specific example of the K3 surface $Y$, since this was proposed by Corvaja and Zannier in \cite[Rmk 3.5]{articoloHP}. However, the technique of the proof of Proposition \ref{Prop:quotient} can be applied to other quotients as well. For instance, it works for quotients of the diagonal quartic surfaces $V_{a,b,c,d}$ which satisfy the hypothesis of Proposition \ref{Pr:diagonalquartics}, by the automorphism $\sigma_{abcd}:[x:y:z:w]\mapsto [x:-y:-z:w]$.
\end{remark}

\bibliographystyle{alpha}      
\bibliography{Non_rational_HP}

\begin{thebibliography}{LMvL10}

\bibitem[BK85]{bogKats}
F.~A. Bogomolov and P.~I. Katsylo.
\newblock Rationality of some quotient varieties.
\newblock {\em Mat. Sb. (N.S.)}, 126(168)(4):584--589, 1985.

\bibitem[Bog88]{Bogomolov}
F~A Bogomolov.
\newblock The brauer group of quotient spaces by linear group actions.
\newblock {\em Mathematics of the USSR-Izvestiya}, 30(3):455, 1988.

\bibitem[Bor96]{borovoi}
Mikhail Borovoi.
\newblock The {B}rauer-{M}anin obstructions for homogeneous spaces with
  connected or abelian stabilizer.
\newblock {\em J. Reine Angew. Math.}, 473:181--194, 1996.

\bibitem[BSFP14]{fibrazioniHP}
Lior Bary-Soroker, Arno Fehm, and Sebastian Petersen.
\newblock On varieties of {H}ilbert type.
\newblock {\em Ann. Inst. Fourier (Grenoble)}, 64(5):1893--1901, 2014.

\bibitem[BT00]{Bogomolov2}
F.~A. Bogomolov and Yu. Tschinkel.
\newblock Density of rational points on elliptic {$K3$} surfaces.
\newblock {\em Asian J. Math.}, 4(2):351--368, 2000.

\bibitem[CSS98]{swinnertonct}
J.-L. {Colliot-Th{\'e}l{\`e}ne}, A.~N. {Skorobogatov}, and
  P.~{Swinnerton-Dyer}, Sir.
\newblock {Hasse principle for pencils of curves of genus one whose Jacobians
  have rational 2-division points}.
\newblock {\em Inventiones Mathematicae}, 134:579--650, nov 1998.

\bibitem[CTea05]{mumbai04}
Jean-Louis Colliot-Thelene and et~al.
\newblock The rationality problem for fields of invariants under linear
  algebraic groups (with special regards to the brauer group), 2005.

\bibitem[CTS87]{congetturastrong}
Jean-Louis Colliot-Th\'el\`ene and Jean-Jacques Sansuc.
\newblock Principal homogeneous spaces under flasque tori: applications.
\newblock {\em J. Algebra}, 106(1):148--205, 1987.

\bibitem[CZ16]{articoloHP}
Pietro Corvaja and Umberto Zannier.
\newblock On the hilbert property and the fundamental group of algebraic
  varieties.
\newblock {\em Mathematische Zeitschrift}, pages 1--24, 2016.

\bibitem[Har77]{hartshorne}
Robin Hartshorne.
\newblock {\em Algebraic geometry}.
\newblock Springer-Verlag, New York, 1977.
\newblock Graduate Texts in Mathematics, No. 52.

\bibitem[Har07]{harari}
David Harari.
\newblock Quelques propri\'et\'es d'approximation reli\'ees \`a la cohomologie
  galoisienne d'un groupe alg\'ebrique fini.
\newblock {\em Bull. Soc. Math. France}, 135(4):549--564, 2007.

\bibitem[Liu02]{Liu}
Q.~Liu.
\newblock {\em Algebraic Geometry and Arithmetic Curves}.
\newblock Oxford graduate texts in mathematics. Oxford University Press, 2002.

\bibitem[LMvL10]{articoloV}
Adam Logan, David McKinnon, and Ronald van Luijk.
\newblock Density of rational points on diagonal quartic surfaces.
\newblock {\em Algebra Number Theory}, 4(1):1--20, 2010.

\bibitem[Neu79]{weakweaksolvable}
Jürgen Neukirch.
\newblock On solvable number fields.
\newblock {\em Inventiones mathematicae}, 53:135--164, 1979.

\bibitem[NSW16]{Neukirchnovo}
J.~Neukirch, A.~Schmidt, and K.~Wingberg.
\newblock {\em Cohomology of Number Fields}.
\newblock Grundlehren der mathematischen Wissenschaften. Springer Berlin
  Heidelberg, 2016.

\bibitem[Par99]{parent}
Pierre Parent.
\newblock Bornes effectives pour la torsion des courbes elliptiques sur les
  corps de nombres.
\newblock {\em J. Reine Angew. Math.}, 506:85--116, 1999.

\bibitem[Sal84]{Saltman}
David~J. Saltman.
\newblock Noether's problem over an algebraically closed field.
\newblock {\em Invent. Math.}, 77(1):71--84, 1984.

\bibitem[SD00]{swinnertondiagonal2}
Peter Swinnerton-Dyer.
\newblock Arithmetic of diagonal quartic surfaces, ii.
\newblock {\em Proceedings of the London Mathematical Society}, 80(3):513--544,
  2000.

\bibitem[SD13]{swinnerton-dyer2013}
H.~Peter Swinnerton-Dyer.
\newblock Density of rational points on certain surfaces.
\newblock {\em Algebra Number Theory}, 7(4):835--851, 2013.

\bibitem[Ser08]{Serre}
Jean-Pierre Serre.
\newblock {\em Topics in {G}alois theory}, volume~1 of {\em Research Notes in
  Mathematics}.
\newblock A K Peters, Ltd., MA, second edition, 2008.

\bibitem[Sil86]{silverman1}
J.H. Silverman.
\newblock {\em The Arithmetic of Elliptic Curves}.
\newblock Applications of Mathematics. 1986.

\bibitem[Sko01]{skorobogatov}
A.~Skorobogatov.
\newblock {\em Torsors and Rational Points}.
\newblock Cambridge Tracts in Mathematics. Cambridge University Press, 2001.

\bibitem[{Sta}17]{stacks-project}
The {Stacks Project Authors}.
\newblock \itshape stacks project.
\newblock \url{http://stacks.math.columbia.edu}, 2017.

\bibitem[Swa69]{swan}
Richard~G. Swan.
\newblock Invariant rational functions and a problem of {S}teenrod.
\newblock {\em Invent. Math.}, 7:148--158, 1969.

\bibitem[vL12]{RonaldvanLuijk2012}
Ronald van Luijk.
\newblock Density of rational points on elliptic surfaces.
\newblock {\em Acta Arithmetica}, 156(2):189--199, 2012.

\bibitem[vSc54]{shafarevichincorrect}
I.~R. \v~Safarevi\v~c.
\newblock Construction of fields of algebraic numbers with given solvable
  {G}alois group.
\newblock {\em Izv. Akad. Nauk SSSR. Ser. Mat.}, 18:525--578, 1954.

\end{thebibliography}

\end{document}